\newcommand{\citenoadjust}[1]{{\let\cite@adjust\empty#1}}
\DeclareMathOperator{\sgn}{sgn}
\DeclareMathOperator{\bd}{bd} \DeclareMathOperator{\cl}{cl}
\DeclareMathOperator{\dist}{dist}
\newtheorem{theorem}{Theorem}[section]
\newtheorem{fcon}[theorem]{False Conjecture}
\newtheorem{corollary}[theorem]{Corollary}
\theoremstyle{definition}
\newtheorem{definition}[theorem]{Definition}
\theoremstyle{remark}
\newtheorem{remark}[theorem]{Remark}
\numberwithin{equation}{section}
\newcommand{\pt}{{\rm pt}}
\newcommand{\id}{{\rm id}}
\newcommand{\inte}{\mathop{\rm int}}
\renewcommand{\epsilon}{\varepsilon}
\renewcommand{\phi}{\varphi}
\newcommand{\Sg}{\Sigma}
\renewcommand{\epsilon}{\varepsilon}
\renewcommand{\phi}{\varphi}
\DeclareMathOperator{\vol}{vol}
\newcommand{\Reals}{\mathbb{R}}
\newcommand{\ZZ}{\mathbb{Z}}
\newcommand{\Sphere}{\mathbb{S}}
\title{Convex equipartitions: the spicy chicken theorem}
\author{Roman Karasev}
\author{Alfredo Hubard}
\author{Boris Aronov}
\address[R.~Karasev]{Dept.\ of Mathematics, Moscow Institute of Physics and Technology, Institutskiy per.\ 9, Dolgoprudny, Russia 141700}
\address[R.~Karasev]{Institute for Information Transmission Problems RAS, Bolshoy Karetny per. 19, Moscow, Russia 127994}
\email[R.~Karasev]{r\_n\_karasev@mail.ru}
\address[A.~Hubard]{Universit\'e Paris-Est Marne-la-Vall\'ee}
\email[A.~Hubard]{alfredo.hubard@u-pem.fr}
\address[B.~Aronov]{Department of Computer Science and Engineering, Tandon School of Engineering, New York University, Brooklyn, NY 11201 USA}
\email[B.~Aronov]{boris.aronov@nyu.edu}
\begin{document}
\begin{abstract}
We show that, for any prime power $n$ and any convex body $K$ (i.e., a compact convex set with interior) in $\Reals^d$, there exists a partition of $K$ into $n$ convex sets with equal volumes and equal surface areas. Similar results regarding equipartitions with respect to continuous functionals and absolutely continuous measures on convex bodies are also proven. These include a generalization of the ham-sandwich theorem to arbitrary number of convex pieces confirming a conjecture of Kaneko and Kano, a similar generalization of perfect partitions of a cake and its icing, and a generalization of the Gromov--Borsuk--Ulam theorem for convex sets in the model spaces of constant curvature. 
\end{abstract}
\maketitle

\section{Introduction}

Imagine that you are cooking chicken at a party. You will cut the raw chicken fillet with a sharp knife, marinate each of the pieces in a spicy sauce, and then fry the pieces. The surface of each piece will be crispy and spicy. Can you cut the chicken so that all your guests get the same amount of crispy crust and the same amount of chicken?\footnote{Vegetarian readers are welcome to substitute the chicken filet with a peeled potato.} Thinking of two-dimensional ``convex chickens,'' Nandakumar and Ramana Rao \cite{nara2008} asked the ``interesting and annoyingly resistant question'' \cite{barblsz2010} of whether \emph{a convex body in the plane can be partitioned into $n$ convex regions with equal areas and equal perimeters}.  This is easy for $n=2$ and known for $n=3$, see \cite{barblsz2010}. We confirm this conjecture and its natural generalization to higher dimensions for $n$ a prime power:

\begin{corollary}
\label{spicy-chicken}
Given a convex body $K$ in $\Reals^d$, a prime $p$, a positive integer $k$, it is possible to partition $K$ into $n=p^k$ convex bodies with equal $d$-dimensional volumes and equal $(d-1)$-dimensional surface areas.
\end{corollary}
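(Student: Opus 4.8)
The plan is to present the convex equipartitions of $K$ into $n$ pieces of equal volume as a finite-dimensional, $S_n$-equivariant family, and then to force equality of the $n$ surface areas by an equivariant-topological argument; the prime-power hypothesis enters only at the end, through a characteristic-class computation for configuration spaces --- the same mechanism that makes the topological Tverberg theorem work precisely for prime powers. \textbf{Step 1 (parametrization by optimal transport).} Let $\mu$ be Lebesgue measure on $K$, normalized to total mass $n$. For a configuration $\mathbf x=(x_1,\dots,x_n)$ of $n$ distinct points of $\Reals^d$, consider the optimal transport of $\mu$ to the uniform measure on $\{x_1,\dots,x_n\}$ with quadratic cost. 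By the Aurenhammer--Hoffmann--Aronov / McCann theorem there is an essentially unique weight vector $w=w(\mathbf x)\in\Reals^n$, continuous in $\mathbf x$, for which the power (Laguerre) cells
\[
  K_i(\mathbf x)=\bigl\{\, z\in K : |z-x_i|^2-w_i\le|z-x_j|^2-w_j \ \ \text{for all } j \,\bigr\}
\]
are convex, have pairwise disjoint interiors, cover $K$, and each has $\mu$-measure $1$ --- in particular positive volume, so each $K_i(\mathbf x)$ is a convex body. Permuting the $x_i$ permutes the $K_i$, so $\mathbf x\mapsto(K_1(\mathbf x),\dots,K_n(\mathbf x))$ is an $S_n$-equivariant map from the configuration space $F(\Reals^d,n)$ of $n$ distinct labelled points into the space of volume-equipartitions of $K$.

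\textbf{Step 2 (reduction to a Borsuk--Ulam-type statement).} Let $W_n:=\{\,v\in\Reals^n:\sum_i v_i=0\,\}$ carry its permutation $S_n$-action and define the $S_n$-equivariant surface-area discrepancy map
\[
  \Phi:F(\Reals^d,n)\longrightarrow W_n,\qquad \Phi(\mathbf x)_i=\vol_{d-1}\bigl(\partial K_i(\mathbf x)\bigr)-\frac1n\sum_{j=1}^{n}\vol_{d-1}\bigl(\partial K_j(\mathbf x)\bigr).
\]
A zero of $\Phi$ is precisely a partition of $K$ into $n$ convex bodies of equal volume and equal surface area, so it suffices to prove $\Phi$ has a zero, equivalently that there is no $S_n$-equivariant map $F(\Reals^d,n)\to S(W_n)$ into the unit sphere of $W_n$ (dimension $n-2$). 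To run this rigorously one passes to a compact model of the domain --- a suitable compactification of $F(\Reals^d,n)$, or a compact space of volume-equipartitions --- to which $\Phi$ extends $S_n$-equivariantly; the isoperimetric inequality helps control the added boundary, since a cell of fixed positive volume that is being pinched has arbitrarily large surface area and thus cannot occur at a configuration where all $n$ surface areas are finite and equal.

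\textbf{Step 3 (the prime-power input).} Assume $n=p^k$ and let $G\cong(\ZZ/p)^k$ act on the index set $\{1,\dots,n\}\cong G$ by left translation; the induced $G$-action on $F(\Reals^d,n)$ is free, and an $S_n$-map to $S(W_n)$ would restrict to a $G$-map. Over $\mathbb F_p$ the representation $W_n$ restricted to $G$ is the reduced regular representation, so its equivariant Euler class $e\in H^{n-1}_G(\pt;\mathbb F_p)$ is a product of Euler classes of nontrivial irreducible summands and is therefore nonzero. Since $e$ restricts to $0$ in $H^*_G(S(W_n))$ (the Gysin/Thom sequence of the representation sphere), a $G$-map $F(\Reals^d,n)\to S(W_n)$ cannot exist --- equivalently $e\notin\hind_G F(\Reals^d,n)$ in the sense of Fadell--Husseini --- once one knows that the image of $e$ in $H^{n-1}_G\bigl(F(\Reals^d,n);\mathbb F_p\bigr)$ is still nonzero. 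This last nonvanishing is where $n=p^k$ is essential: it follows either from Volovikov's lemma (the $\mathbb F_p$-cohomology of $F(\Reals^d,n)$ and of its Borel construction have the expected ``product of spheres'' shape, so the relevant spectral sequence degenerates and Euler classes survive) or from an explicit $G$-map into $F(\Reals^d,n)$ out of a product or join of spheres whose index is computable.

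\textbf{Main obstacle.} The genuine difficulties are (i) in Step 2, pinning down the right compact configuration space and controlling the optimal-transport partition near degenerate configurations, so that $\Phi$ is an honest continuous $S_n$-map with no solutions lost or spuriously created on the added boundary; and (ii) in Step 3, the characteristic-class computation showing that the Euler class of the reduced regular representation survives in the $(\ZZ/p)^k$-equivariant cohomology of $F(\Reals^d,n)$ --- a statement that holds precisely when $n$ is a prime power, which is exactly why the corollary is restricted to $n=p^k$.
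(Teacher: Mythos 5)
Your Steps 1--2 are essentially the paper's own optimal-transport reduction (Section~\ref{section:transport}): parametrize equal-volume power-diagram partitions by $F_n(\Reals^d)$ via Theorem~\ref{aha1998} and reduce to the nonexistence of a nowhere-zero $\Sigma_n$-equivariant map to $\alpha_n$. The genuine gap is Step 3, which is exactly where the whole difficulty of the theorem lives, and your proposed justifications do not fill it. You need the image of the Euler class $e(W_n|_G)\in H^{n-1}_G(\pt;\mathbb F_p)$ to be nonzero in $H^{n-1}_G(F_n(\Reals^d);\mathbb F_p)$ for $G=(\ZZ/p)^k$, but ``Volovikov's lemma'' and the claim that ``the spectral sequence degenerates because of the product-of-spheres shape'' are connectivity-based mechanisms: they apply to spaces whose reduced cohomology vanishes below the degree of the Euler class (as for the deleted joins in the prime-power Tverberg theorem, which are $(\dim-1)$-connected). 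The configuration space $F_n(\Reals^d)$ is only $(d-2)$-connected and carries abundant cohomology in all degrees up to $(n-1)(d-1)$, so there is no a priori injectivity of $H^{n-1}(BG)\to H^{n-1}_G(F_n(\Reals^d))$, and the Serre spectral sequence emphatically does not degenerate for free. Your fallback, ``an explicit $G$-map from a product or join of spheres with computable index,'' is not exhibited; the known candidate (iterated wreath products of spheres) handles $p=2$ via Hung's computations, and for odd $p$ or $k\ge 2$ the needed nonvanishing is precisely the content of the Fuchs--Vassiliev--Karasev computation on the cell structure of $F_n'(\Reals^d)$ with twisted coefficients (Section~\ref{section:top}), or of the Blagojevi\'c--Ziegler obstruction-theoretic proof. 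Note also that the present paper only asserts the single-prime case $n=p$ at the level of $\ZZ/p$-equivariance; for prime powers the proof uses the full $\Sigma_n$-structure (sign-twisted coefficients), and while a $(\ZZ/p)^k$-index computation for $F_n(\Reals^d)$ can be made to work, it is a substantial theorem in its own right, not a remark.

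Two smaller points in Step 2. No compactification is needed and none is used in the paper: Theorem~\ref{top} is stated for the open configuration space, and your $\Phi$ is already continuous there because every cell has volume exactly $\vol(K)/n>0$, hence is a convex body varying continuously in the Hausdorff metric. Moreover your isoperimetric remark is wrong as stated: a convex cell of fixed volume contained in the bounded body $K$ cannot have arbitrarily large surface area, since surface area is monotone under inclusion of convex bodies, so ``pinching blows up the surface area'' is not an available mechanism; if you did insist on a compactified parameter space you would have to control colliding and escaping sites differently (this is one reason the paper's detailed proof instead runs the measure-separating-functions argument of Section~\ref{section:sep-functions}, where the discontinuity issue is handled by restricting to the zero set $Z$ of the volume map and extending equivariantly).
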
 

In fact, we derive this result from the following much more general one. Let $M^d(\kappa)$ be the $d$-dimensional simply-connected Riemannian manifold of constant curvature $\kappa$, i.e., hyperbolic space ($\kappa=-1$), Euclidean space ($\kappa=0$), or the round sphere ($\kappa=1$). Let $\mathcal{K}^d(\kappa)$ be the space of its geodesically convex sets with the Hausdorff metric.

\begin{remark}For simplicity, we state our results for absolutely continuous measures; by a standard limiting argument this implies the same results for weak limits of absolutely continuous measures substituting equalities by inequalities (e.g., for finitely supported measures). If we further assume that a limiting measure $\mu$ has the property that for every geodesic hyperplane $H$, we have $\mu(H)=0$, then our results hold with equalities (e.g., for the volume of a compact convex set).\end{remark}

\begin{theorem}
\label{extra}
Given an absolutely continuous finite measure $\mu$ on $M^d(\kappa)$, a convex body $K\in \mathcal{K}^d(\kappa)$, a family of $d-1$ continuous functionals $\phi_1,\phi_2, \ldots, \phi_{d-1} \colon \mathcal{K}^d(\kappa) \to \Reals$, a prime number $p$, and a positive integer $k$,
there is a partition of $K$ into $n = p^k$ convex bodies $K_1, K_2 \ldots K_n$, such that  
\[
  \mu(K_i)=\frac{\mu(K)}{n}
\]
and 
\[
  \phi_j(K_1)=\phi_j(K_2)= \ldots =\phi_j(K_{n}),
\]
for all $ 1\leq i\leq n$ and $1\leq j\leq d-1$.
\end{theorem}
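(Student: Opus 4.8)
The plan is to follow the optimal-transport approach pioneered for the Nandakumar--Ramana Rao problem and developed by Karasev and by Blagojevi\'c--Ziegler. First I would produce a canonical family of convex partitions of $K$ parametrized by points of a suitable configuration space. Given a point $x=(x_1,\dots,x_n)\in (M^d(\kappa))^n$ together with weights $w=(w_1,\dots,w_n)\in\Reals^n$, one forms the \emph{generalized Voronoi partition}
\[
  V_i(x,w)=\{\,y\in K:\ \operatorname{dist}_\kappa(y,x_i)^2+w_i\le \operatorname{dist}_\kappa(y,x_j)^2+w_j\ \text{for all }j\,\},
\]
whose cells are geodesically convex (this is where the constant-curvature hypothesis is used: the squared distance function has convex sublevel differences, via the appropriate power-diagram computation in the model spaces). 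By the standard semidiscrete optimal-transport lemma (Aurenhammer-type / Minkowski-type existence), for each $x$ with distinct coordinates there is a weight vector $w(x)$, unique up to a global additive constant, making $\mu(V_i(x,w(x)))=\mu(K)/n$; continuity and equivariance of $w$ in $x$ follow from strict convexity of the associated Kantorovich functional. Thus I obtain a continuous, $S_n$-equivariant map assigning to each configuration $x$ an equipartition of $(K,\mu)$ into $n$ convex bodies.

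Next I would set up the topological part. Record the ``defect'' of a partition by the vector of differences of the functionals: for each $j$ define $f_j\colon F(M^d(\kappa),n)\to \Reals^{n}$ (or better, into the subspace $W_n=\{t\in\Reals^n:\sum t_i=0\}$) by $f_j(x)=\big(\phi_j(V_i(x,w(x)))-\tfrac1n\sum_\ell\phi_j(V_\ell(x,w(x)))\big)_i$, and assemble $f=(f_1,\dots,f_{d-1})\colon F(M^d(\kappa),n)\to W_n^{\,d-1}$. This is a continuous $S_n$-equivariant map; the theorem is equivalent to the assertion that $f$ has a zero. The target $W_n^{d-1}$ is exactly $(d-1)$ copies of the standard $(n-1)$-dimensional representation of $S_n$, which by the Blagojevi\'c--Ziegler machinery has the property that an $S_n$-map from the configuration space $F(\Reals^d,n)$ to the sphere of $W_n^{d-1}$ cannot exist when $n=p^k$ --- this is the topological fact I would invoke, restricting to the subgroup $\ZZ_p^k$ (for $n=p^k$) and using that $F(\Reals^d,n)$ is $(d-1)$-connected through dimension considerations, together with a Fadell--Husseini index / equivariant obstruction-theory computation showing the index of the configuration space does not vanish below the relevant degree. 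Concretely: if $f$ had no zero, normalizing gives an $S_n$-map $F(M^d(\kappa),n)\to S(W_n^{d-1})$, contradicting the index bound.

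There are two technical points I would have to handle carefully. The configuration space is noncompact (coordinates may collide or, in the spherical case, become antipodal), so I cannot directly apply a Borsuk--Ulam-type statement on a closed manifold; the fix, as in the cited works, is to compactify --- blow up the diagonal, or pass to a deleted-product model / the classifying space $B(\ZZ_p^k,d)$ --- and check that the map $w(x)$, hence $f$, extends continuously (or at least that no zero escapes to the boundary, which one arranges by noting that as two points coalesce the corresponding cell can be merged, reducing to a smaller $n$ and an inductive or degenerate analysis). Handling curvature $\kappa=\pm1$ requires verifying that ``power diagrams'' still have convex cells and that the transport problem is still solvable for an arbitrary convex body $K\subset M^d(\kappa)$; for $\kappa=1$ one restricts to $K$ contained in an open hemisphere so that squared distance is convex and Voronoi cells behave correctly.

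The main obstacle I anticipate is the second, topological step executed uniformly for all $d$ and all prime powers $n=p^k$: one needs the nonvanishing of the Fadell--Husseini index of the ordered configuration space $F(\Reals^d,n)$ with respect to the $\ZZ_p^k$-action in the precise degree $(d-1)(n-1)$, which for $p$ odd and $k>1$ is genuinely delicate (this is exactly the gap that made the higher prime-power cases of Nandakumar--Ramana Rao hard). I would either cite the Blagojevi\'c--Ziegler computation of this index directly or reprove it via the Serre spectral sequence of the Borel construction, using the known mod-$p$ cohomology of configuration spaces (the Cohen--Taylor / F.~Cohen description) and the Euler class of the sum of the relevant line bundles; verifying that this Euler class is nonzero modulo $p$ in the required degree is the crux. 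Everything else --- the optimal-transport construction, equivariance, the reduction to a zero-finding problem --- is by now routine and I would present it compactly, spending the bulk of the effort on the index calculation and on the boundary/compactification bookkeeping.
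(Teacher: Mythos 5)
Your plan is the optimal-transport route (the approach of Hubard--Aronov sketched in Section~\ref{section:transport}), not the paper's detailed proof, which instead parametrizes partitions by $n$-tuples of linear functions on $\Reals^{d+1}$ restricted to the model space $M^d(\kappa)$ and applies Theorem~\ref{top} once to a map into $\alpha_n^{\oplus d}$, handling the measure and the $d-1$ functionals simultaneously (with a Tietze-type equivariant extension to cure the discontinuity where cells vanish). Your reduction of the Euclidean case to the nonvanishing statement for $\Sigma_n$-maps $F_n(\Reals^d)\to\alpha_n^{\oplus(d-1)}$ is sound, and citing that statement (Theorem~\ref{top}, or Blagojevi\'c--Ziegler) is legitimate; your worry about compactifying the configuration space is also unnecessary at the reduction stage, since the topological theorem is stated for the open configuration space and the compactness bookkeeping lives entirely inside its proof.

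The genuine gap is in the curved cases $\kappa=\pm1$. The cells
\[
V_i(x,w)=\{\,y\in K:\ \dist(y,x_i)^2+w_i\le \dist(y,x_j)^2+w_j\ \text{for all }j\,\}
\]
are \emph{not} geodesically convex when $\kappa\neq 0$ and the weights differ: in $\Reals^d$ the difference $|y-x_i|^2-|y-x_j|^2$ is affine in $y$, so the bisectors are hyperplanes, but on $\mathbb S^d$ or $\mathbb H^d$ the difference $\dist(y,x_i)^2-\dist(y,x_j)^2$ is not linear in any sense, its level sets are not geodesic hyperplanes, and convexity of the squared distance along geodesics (your hemisphere restriction) does not help, since a difference of convex functions need not have convex sublevel sets. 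Moreover, restricting $K$ to an open hemisphere would weaken the theorem, which allows an arbitrary convex body and even the whole sphere. To salvage your route you must replace the cost: either use cells cut out by homogeneous linear functions on the ambient $\Reals^{d+1}$ (as in the paper, where the walls are automatically geodesic hyperplanes), or work in the Klein/gnomonic model so that geodesic hyperplanes become affine ones and an honest Euclidean power diagram can be transported back; in either case the semidiscrete transport step must be redone for the new cost. A secondary point to tighten: for a general absolutely continuous $\mu$ (possibly with disconnected support) the Kantorovich functional is not strictly concave modulo constants, so uniqueness and continuity of $w(x)$ require an argument or an explicit equivariant selection rather than the assertion you give.
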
 

\begin{remark}
Note that, for $M^d(0)=\mathbb R^d$, if we let $\phi_i(K)$ be the $i$th Steiner measure of the convex set, i.e., the coefficient of $t^i$ in the polynomial 
\[
  P_K(t) = \mu(K+tB),
\]
then Corollary~\ref{spicy-chicken} reduces to Theorem~\ref{extra}; here $B$ is the unit ball.  In fact, from Theorem~\ref{extra} it follows that it is possible to make all Steiner measures of the parts equal at the same time. In particular, in $\mathbb R^3$ we can equalize the mean width along with the volume and the surface area. 
\end{remark}

\begin{remark}
 When the ambient space is the sphere $\mathbb S^d$ it is possible to take the whole sphere
  $\mathbb S^d$ as $K$ in this theorem, even though it is not convex.
\end{remark}


This paper is based on the preprints \cite{kar2010} and \cite{huar2010}. In both papers, generalized Voronoi diagrams (also known as power diagrams) were used in conjuction with a Borsuk--Ulam-type theorem about the nonexistence of a nowhere-vanishing $\Sigma_n$-equivariant map from the configuration space to a real space of appropriate dimension (see~Theorem~\ref{top} below) with certain $\Sigma_n$-action. 

The difference in the methods of \cite{kar2010} and \cite{huar2010} is that in \cite{huar2010} optimal transport was used as a first step and afterwards the Borsuk--Ulam-type statement implied the result. This method provides a different intuition and it might be, in principle, stronger than the one in \cite{kar2010}. On the other hand, we do not know of any instance where the method of \cite{huar2010} exhibits its superiority, while the method of \cite{kar2010} is simpler and more elegant, as it deals with the measure and the functionals in one step using the same Borsuk--Ulam-type statement in a configuration space, corresponding to a space of functions of dimension $d+1$.  In this paper we provide details of the proof in \cite{kar2010} in Section~\ref{section:sep-functions} and only sketch the approach of \cite{huar2010} in Section~\ref{section:transport}.\footnote{The reduction of Theorem~\ref{extra} to Theorem~\ref{top} was independently discovered in  \cite{huar2010} and \cite{kar2010}; it is the main contribution of these papers. The original version of Theorem~\ref{extra} in \cite{huar2010} is weaker than that in \cite{kar2010}.}
 
The argument in \cite{kar2010} generalizes an idea that Gromov used on his way to prove the waist of the sphere inequality, see \cite{gr2003} and \cite{mem2009}. In order to state the following result we define a \emph{centermap} as a continuous functional $c\colon\mathcal{K}^d(\kappa) \to M^d(\kappa)$. The so called Gromov--Borsuk--Ulam theorem of  \cite{gr2003} and \cite{mem2009} is the case when $p=2$ and $\mu$ is the $\mathrm{O}(d)$-invariant probability measure on $\mathbb S^d$.

\begin{theorem}
\label{gro}
Given a convex body $K \in \mathcal{K}^d(\kappa)$ (or the whole sphere if $M^d(\kappa)=\mathbb S^d$), an absolutely continuous finite measure ~$\mu$ on $K$, a prime $p$, a positive integer $k$, a continuous map $g\colon M^d(\kappa) \to \Reals^{d-1}$, and a continuous centermap $c$, then there exists a partition of $K$ into $n = p^k$ convex sets $K_1,K_2, \ldots K_{n}$, such
that
\[
\mu(K_i)=\frac{\mu(K)}{n},
\]
for all $i$, and
\[
g(c(K_1))=g(c(K_2))=\dots=g(c(K_{n})).
\]
\end{theorem}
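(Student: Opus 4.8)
The plan is to deduce Theorem~\ref{gro} directly from Theorem~\ref{extra}, with essentially no extra work. Write $g=(g_1,\dots,g_{d-1})$ for the coordinate components of the map $g\colon M^d(\kappa)\to\Reals^{d-1}$, and define $d-1$ functionals on the space of convex bodies by composing with the centermap:
\[
  \phi_j:=g_j\circ c\colon\mathcal{K}^d(\kappa)\longrightarrow\Reals,\qquad j=1,\dots,d-1.
\]
Each $\phi_j$ is continuous with respect to the Hausdorff metric on $\mathcal{K}^d(\kappa)$, being a composition of the continuous centermap $c$ with the continuous coordinate function $g_j$; this is the only point that genuinely needs to be checked, and it is immediate.

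Next I would feed the data $\mu$, $K$, the family $\phi_1,\dots,\phi_{d-1}$, the prime $p$, and the integer $k$ into Theorem~\ref{extra}. (A finite measure on $K$ is in particular an absolutely continuous finite measure on $M^d(\kappa)$, and when $M^d(\kappa)=\mathbb{S}^d$ one invokes the version of Theorem~\ref{extra}, recorded in the remark following it, that permits $K=\mathbb{S}^d$.) Theorem~\ref{extra} then yields a partition of $K$ into $n=p^k$ convex bodies $K_1,\dots,K_n$ with $\mu(K_i)=\mu(K)/n$ for every $i$ and $\phi_j(K_1)=\dots=\phi_j(K_n)$ for every $j$. Unwinding $\phi_j=g_j\circ c$ and reassembling the $d-1$ coordinates gives $g(c(K_1))=\dots=g(c(K_n))$, which is exactly the assertion of Theorem~\ref{gro}.

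Consequently the whole difficulty sits in Theorem~\ref{extra}, and there is no separate obstacle to overcome here: the equipartition of $\mu$ and the equalization of the functionals are produced simultaneously by the power-diagram (generalized Voronoi) parametrization of equipartitions together with the prime-power equivariant Borsuk--Ulam statement of Theorem~\ref{top}. I would also remark that passing through a centermap costs nothing in generality, since any continuous functionals $\phi_1,\dots,\phi_{d-1}$ can be realized as $g_j\circ c$ --- for instance, in the Euclidean model, by taking $c(K)=(\phi_1(K),\dots,\phi_{d-1}(K),0)\in\Reals^d$ and letting $g$ be the projection onto the first $d-1$ coordinates --- so Theorems~\ref{extra} and~\ref{gro} are two faces of the same result, the present one being the face that visibly specializes, for $p=2$ and $\mu$ the rotation-invariant probability measure on $\mathbb{S}^d$, to the Gromov--Borsuk--Ulam theorem.
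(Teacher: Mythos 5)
Your proposal is correct and uses exactly the paper's key idea: set $\phi_j := g_j\circ c$ and note that these are continuous functionals on $\mathcal{K}^d(\kappa)$, so the equalization of $g\circ c$ on the pieces is just the equalization of $d-1$ scalar functionals. The only divergence from the paper is procedural: the paper explicitly says Theorem~\ref{gro} ``does not follow directly from Theorem~\ref{extra}'' and instead instructs one to \emph{re-run the proof} of Theorem~\ref{extra} with $\phi_i=g_i(c(\cdot))$, whereas you invoke the statement of Theorem~\ref{extra} as a black box. For $K$ a convex body your black-box deduction is unobjectionable (extending $\mu$ by zero and composing costs nothing), but for the case $K=\mathbb{S}^d$ --- which is precisely the Gromov--Borsuk--Ulam setting the theorem is meant to cover --- the statement of Theorem~\ref{extra} does not literally apply, and your argument leans on the unproved remark following it; running the proof of Theorem~\ref{extra} verbatim (the partition pieces $V_j$ are still convex, and the maps $f_i$, $f_d$ are built the same way) is what actually covers that case, which is presumably why the authors phrase it as they do. Your closing observation that the two theorems are interchangeable via $c(K)=(\phi_1(K),\dots,\phi_{d-1}(K),0)$ is fine as stated for the Euclidean model, but it is a side remark and not needed for the proof.
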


This theorem does not follow directly from Theorem~\ref{extra}, but it suffices to put $\phi_i(K_j):=g_i(c(K_j))$ and follow the proof of Theorem~\ref{extra}. By taking $g$ to be a linear projection it is easy to see that this theorem is quantitatively best possible.


\bigskip
Yet another family of interesting corollaries related to the ham-sandwich theorem arises by defining the functionals by measures. The following result was proven in \cite{sob2010,kar2010}:

\begin{corollary}
\label{ham} 
Given $d$ absolutely continuous finite measures $\mu_1, \mu_1,\ldots \mu_d$ on $\Reals^d$, and \emph{any} number $n$ there is a partition of $\Reals^d$ into convex regions $K_1,K_2, \ldots K_n$ with $\mu_i(K_j)=\frac{1}{n}$ for all $i$ and $j$ simultaneously.
\end{corollary}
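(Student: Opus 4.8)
The plan is to bootstrap Corollary~\ref{ham} from Theorem~\ref{extra} in three steps; after normalizing I may assume each $\mu_i$ is a probability measure, and I discard any $\mu_i\equiv 0$.

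First I would settle the case of a prime power $q=p^k$ and a compact convex body $L\subset\Reals^d$ carrying absolutely continuous finite measures $\lambda_1,\dots,\lambda_d$ with all $\lambda_i(L)>0$. I would apply Theorem~\ref{extra} with $K=L$, $\mu:=\lambda_d$, and functionals $\phi_j(C):=\lambda_j(C)$ for $j=1,\dots,d-1$. The one point to check is that $C\mapsto\lambda_j(C)$ is continuous on $\mathcal{K}^d(0)$ for the Hausdorff metric; this holds because Hausdorff convergence of convex sets forces pointwise convergence of indicator functions away from the boundary of the limit, a Lebesgue-null (hence $\lambda_j$-null) convex hypersurface, so $\lambda_j(C_m)\to\lambda_j(C)$ by dominated convergence. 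Theorem~\ref{extra} then produces a partition $L=L_1\cup\dots\cup L_q$ into convex bodies with $\lambda_d(L_i)=\lambda_d(L)/q$ and $\lambda_j(L_1)=\dots=\lambda_j(L_q)$ for $j<d$; since the $L_i$ have pairwise disjoint interiors and union $L$, we have $\sum_i\lambda_j(L_i)=\lambda_j(L)$, and $q$ equal summands force $\lambda_j(L_i)=\lambda_j(L)/q$ for every $i,j$.

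Next I would handle an arbitrary $n$, still on a compact convex body, by iteration: writing $n=q_1q_2\cdots q_m$ as a product of primes, I would cut $L$ into $q_1$ convex bodies balanced for all $\lambda_i$, then cut each of those (each has nonempty interior, as its $\lambda_i$-mass is positive, so the prime-power step applies to it) into $q_2$ balanced convex bodies, and so on; after $m$ rounds $L$ is split into $n$ convex bodies each carrying exactly a $1/n$ share of every $\lambda_i$. Finally, to pass from a body to $\Reals^d$, I would apply this to the closed balls $\overline{B}_R$ with the measures $\mu_i|_{\overline{B}_R}$, obtaining partitions $\overline{B}_R=K_1^R\cup\dots\cup K_n^R$ with $\mu_i(K_j^R)=\mu_i(\overline{B}_R)/n$, and then let $R\to\infty$ via a compactness argument. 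Any two cells of such a partition are separated by a closed halfspace, so the partition is recorded by $\binom{n}{2}$ oriented hyperplanes; after compactifying the space of hyperplanes in $\Reals^d$ and passing to a subsequence along which all $\binom{n}{2}$ of them converge and their orientations stabilize, I would obtain limiting halfspaces whose intersections $K_1,\dots,K_n$ are convex sets, now possibly unbounded, covering $\Reals^d$ with disjoint interiors. Off the finitely many limiting hyperplanes $\mathbf{1}_{K_j^R}\to\mathbf{1}_{K_j}$ pointwise, and those hyperplanes are $\mu_i$-null, so dominated convergence gives $\mu_i(K_j)=\lim_{R\to\infty}\mu_i(\overline{B}_R)/n=1/n$ for all $i,j$; in particular no cell is empty, and the $\mu_i$-null boundary set may be reassigned to make $\{K_j\}$ a genuine partition.

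Everything resting directly on Theorem~\ref{extra} is routine; the real work is the last step — guaranteeing that the balanced partitions of the balls $\overline{B}_R$ subconverge to an honest partition of $\Reals^d$ and do not degenerate (a cell squeezed to measure zero, overlapping cells in the limit when a separating hyperplane escapes, or mass escaping to infinity). The positivity of each cell's mass, exactly $1/n$ for each $\mu_i$, is what rescues the argument, but turning this into a clean proof is most safely done by working with the explicit power-diagram (generalized Voronoi) form of the partitions produced inside the proof of Theorem~\ref{extra}: there the relevant configuration space of weighted sites is genuinely finite-dimensional and admits a harmless compactification, so the limit exists and is again a power-diagram partition of $\Reals^d$ whose cells necessarily carry mass $1/n$ of each $\mu_i$.
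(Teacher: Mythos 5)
Your proposal is correct, and its core is the same as the paper's: use the measures themselves as the continuous functionals $\phi_i$ in Theorem~\ref{extra} (with $\mu_d$, say, playing the role of $\mu$), and handle general $n$ by iterating over the prime factorization, renormalizing the measures on each cell. Where you genuinely diverge is in how $\Reals^d$ itself is treated. The paper never restricts to balls: its partition machinery (Theorem~\ref{gen-spl}, i.e., power diagrams coming from the $(d+1)$-dimensional space of affine functions) produces convex cells that tile all of $\Reals^d$, possibly unbounded, and continuity of the cell masses needs only that the measures are finite and absolutely continuous; so Corollary~\ref{ham} follows with no limiting argument at all, and this is exactly why the paper's proof is two lines. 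You instead take Theorem~\ref{extra} strictly at its word (compact convex body $K$), equipartition the balls $\overline{B}_R$, and pass to the limit $R\to\infty$ by compactifying the space of the $\binom{n}{2}$ separating halfspaces; your non-degeneration argument is sound --- if a separating halfspace escaped so as to kill a cell, dominated convergence would force that cell's limiting mass to be $0$, contradicting the exact value $1/n$ carried at every stage --- but it is extra work the paper's route avoids, and it costs you the slight awkwardness of reassigning null boundary sets and of cells that you only recover as limits rather than as an explicit diagram. Your closing remark already points at the cleaner fix: the partitions produced inside the proof of Theorem~\ref{extra} are power diagrams, and these are defined on all of $\Reals^d$ from the start, which is precisely the observation the paper exploits. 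In short: correct, same skeleton, but your ball-and-limit step replaces the paper's direct (and simpler) application of the underlying power-diagram equipartition to the whole space.
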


This result was conjectured by Kaneko and Kano \cite{kaka2002} who also proved the planar version. The proof was found independently in \cite{sob2010} and \cite{kar2010}. The proof in \cite{sob2010} is a variation of the argument of the first version of \cite{huar2010} and it has the nice feature of using much more basic algebraic topology, namely the classical Borsuk--Ulam theorem for $\mathbb Z/p$ actions on the sphere. 

With the full power of Theorem~\ref{extra}, the proof of this corollary is very simple and we now sketch it. 
\begin{proof}[Proof of Corollary~\ref{ham}]
Write $n=p_1^{\alpha_1} p_2^{\alpha_2}...p_k^{\alpha_k}$ and apply Theorem~\ref{extra} with $\mu_d$ as the measure, with $\phi_i(A):=\mu_i(A)$ and ${p_1}^{\alpha_1}$. For each cell of the partition $K_j$, apply the theorem again to properly renormalized measures 
\[
  \mu_i '(A):={p_1}^{\alpha_1} \int_{K_j} A\; d\mu_i
\]
to partition each cell into $p_2^{\alpha_2}$ subcells and continue in this manner.
\end{proof}

\begin{remark}
By considering $d+1$ measures, it is easy to observe that this corollary, and hence Theorem~\ref{extra}, is quantitatively best possible.
\end{remark}

The following theorem does not follow directly from Corollary~\ref{ham} because of some discontinuity issues, but is proven in a similar manner. This is a higher-dimensional generalization of the results about perfect partitions in the plane, see \cite{akknrtu2004}. Its proof is contained in Section~\ref{section:sep-functions}.

\begin{theorem}
\label{vol-area-spl} 
Suppose $K\in \mathcal{K}^d(\kappa)$ is a convex body, and, for some $1\le m \le d$, we have $m$ absolutely continuous finite measures $\mu_1,\ldots, \mu_m$ on $K$, and $d-m$ absolutely continuous finite measures $\sigma_1,\ldots, \sigma_{d-m}$ on $\partial K$. Then, for any $n$, the body $K$ can be partitioned into $n$ convex parts $K_1,\ldots, K_n$, such that, for any $i=1,\ldots, m$,
\[
  \mu_i(K_1) = \dots = \mu_i(K_n),
\]
and, for every $i = 1,\ldots, d-m$,
\[
  \sigma_i(K_1\cap\partial K) = \dots = \sigma_i(K_n\cap\partial K).
\]
\end{theorem}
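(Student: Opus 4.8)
The plan is to imitate the proof of Corollary~\ref{ham}: reduce to the case where the number of parts is a prime power and then iterate the construction. Writing $n = p_1^{\alpha_1}\cdots p_r^{\alpha_r}$, it suffices to know that a convex body carrying $m$ absolutely continuous interior measures and $d-m$ absolutely continuous boundary measures can be cut into $p_1^{\alpha_1}$ convex parts equipartitioning all of them; we then repeat this inside each part $K^{(1)}_a$, now using the measure $\mu_m$ as the distinguished measure, with the restricted measures $\mu_i|_{K^{(1)}_a}$ on $K^{(1)}_a$ and with $\sigma_i|_{K^{(1)}_a\cap\partial K}$ regarded as measures on $\partial K^{(1)}_a$ (they happen to be supported on the portion $K^{(1)}_a\cap\partial K$ of that boundary), cutting $K^{(1)}_a$ into $p_2^{\alpha_2}$ pieces, and so on. The elementary observation that for a convex body $K'\subseteq K$ one has $K'\cap\partial K\subseteq\partial K'$ makes the fractions telescope: after all $r$ rounds each final piece receives a $1/n$-fraction of every $\mu_i$ and of every $\sigma_i$.

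It remains to handle $n=p^k$, and here is where the discontinuity issue alluded to above appears: the functional $A\mapsto\sigma_i(A\cap\partial K)$ is not continuous on $\mathcal K^d(\kappa)$, so it cannot be inserted directly into Theorem~\ref{extra}. I would therefore approximate. For small $\epsilon>0$ let $\sigma_i^\epsilon$ be the absolutely continuous (with respect to the Riemannian volume) finite measure obtained by spreading $\sigma_i$ into a thin inner collar of $\partial K$ of width $\epsilon$ and normalizing by $1/\epsilon$; thus $\sigma_i^\epsilon$ is supported inside $K$ near $\partial K$, $\sigma_i^\epsilon\rightharpoonup\sigma_i$ as $\epsilon\to0$, and $A\mapsto\sigma_i^\epsilon(A)$ is a genuinely continuous functional on $\mathcal K^d(\kappa)$, since Hausdorff convergence of convex bodies forces convergence of their volumes and hence of their $\sigma_i^\epsilon$-measures. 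Now apply Theorem~\ref{extra} with the measure $\mu_m$ and the $d-1$ continuous functionals $\phi_1=\mu_1,\dots,\phi_{m-1}=\mu_{m-1}$, $\phi_m=\sigma_1^\epsilon,\dots,\phi_{d-1}=\sigma_{d-m}^\epsilon$. This yields a partition $\mathcal P^\epsilon=\{K_1^\epsilon,\dots,K_n^\epsilon\}$ of $K$ into convex parts, realized as a generalized Voronoi (power) diagram exactly as in the proof of Theorem~\ref{extra}, with $\mu_i(K_j^\epsilon)=\mu_i(K)/n$ for all $i\le m$ and $\sigma_i^\epsilon(K_1^\epsilon)=\dots=\sigma_i^\epsilon(K_n^\epsilon)$ for all $i\le d-m$.

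Finally I would let $\epsilon\to0$. By the compactness of the configuration space parametrizing these generalized Voronoi partitions (the normalization used in Section~\ref{section:sep-functions}), pass to a subsequence along which $\mathcal P^\epsilon$ converges to a partition $\mathcal P=\{K_1,\dots,K_n\}$ of $K$ into convex parts; continuity of the $\mu_i$ gives $\mu_i(K_j)=\mu_i(K)/n$ for all $i\le m$. For the boundary functionals, weak convergence $\sigma_i^\epsilon\rightharpoonup\sigma_i$ together with $K_j^\epsilon\to K_j$ gives $\sigma_i^\epsilon(K_j^\epsilon)\to\sigma_i(K_j\cap\partial K)$ \emph{provided} $\sigma_i$ puts no mass on the relative boundary of $K_j\cap\partial K$ within $\partial K$. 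This is the crux of the argument, and it is exactly where convexity enters: that relative boundary is contained in $\bigcup_{j'\ne j}\bigl(\partial K\cap W_{jj'}\bigr)$, where $W_{jj'}$ is the bisecting hypersurface of the diagram separating the cell $K_j$ from the cell $K_{j'}$. Since $K_j$ and $K_{j'}$ lie on opposite sides of $W_{jj'}$, the body $K$ meets both open sides of $W_{jj'}$, so the convex hypersurface $\partial K$ cannot contain a $(d-1)$-dimensional patch lying inside $W_{jj'}$; hence $\partial K\cap W_{jj'}$ has dimension at most $d-2$ and therefore $\sigma_i$-measure zero, by absolute continuity of $\sigma_i$ on $\partial K$. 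This gives $\sigma_i(K_1\cap\partial K)=\dots=\sigma_i(K_n\cap\partial K)$, as required.

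Two technical points would still need attention. Limit cells $K_j$ could a priori be lower-dimensional; this is ruled out — and the cleanliness of the last paragraph restored — if one first reduces, by a preliminary approximation, to the case where all $\mu_i$ and $\sigma_i$ have continuous positive densities, so that $\mu_m(K_j)=\mu_m(K)/n>0$ forces each $K_j$ to have interior. Also, in the prime-power step used inside the iteration, the measures handed to Theorem~\ref{extra} are supported only on part of the boundary of the current body, which is harmless. I expect the genuine obstacle to be precisely the $\epsilon\to0$ passage for the boundary measures: organizing the weak-convergence bookkeeping and verifying the lower-dimensionality of the wall traces $\partial K\cap W_{jj'}$. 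Everything else is a routine combination of Theorem~\ref{extra} with the iteration scheme behind Corollary~\ref{ham}.
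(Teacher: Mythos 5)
Your route is genuinely different from the paper's. The paper never smooths the boundary measures and never passes to a limit: in the proof of Theorem~\ref{vol-area-spl} it works directly in the configuration space $F_n(L)$ of the proof of Theorem~\ref{gen-spl}, observes that the $m$ interior-measure maps are continuous on all of $F_n(L)$, restricts to the closed set $Z$ where they vanish (there every cell has nonempty interior, so every wall is a geodesic hyperplane transversal to $\partial K$ and the boundary-measure maps are continuous on a neighborhood of $Z$), extends those maps equivariantly to $F_n(L)$ and applies Theorem~\ref{top} once; the non-prime-power case is then the same iteration as in Corollary~\ref{ham}, which you reproduce correctly. Your approach instead uses Theorem~\ref{extra} as a black box on collar-smoothed measures $\sigma_i^\epsilon$ and then extracts a limit partition; this buys you independence from the configuration-space machinery, at the price of the compactness and weak-convergence bookkeeping. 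Your geometric lemma at the end is correct and is the same observation the paper uses for transversality: if $K_j,K_{j'}$ have nonempty interiors on opposite sides of a separating geodesic hyperplane $H$, then $\partial K\cap H$ lies in the relative boundary of the convex set $K\cap H$ inside $H$, hence is $\mathcal H^{d-1}$-null and $\sigma_i$-null.

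There is, however, a gap exactly where you predicted it. The implication ``$\sigma_i^\epsilon\rightharpoonup\sigma_i$, $K_j^\epsilon\to K_j$ in Hausdorff distance, and $\sigma_i$ gives no mass to the relative boundary of $K_j\cap\partial K$, therefore $\sigma_i^\epsilon(K_j^\epsilon)\to\sigma_i(K_j\cap\partial K)$'' is false as a general principle: the moving set can recede from $\partial K$ at a scale that tends to $0$ but is large compared with the collar width $\epsilon$. For instance, if $K_j^\epsilon$ is $K_j$ with a slab of width $h(\epsilon)\to0$, $h(\epsilon)\gg\epsilon$, shaved off along a facet of $K$, then $K_j^\epsilon\to K_j$ in Hausdorff distance while the collar mass over that facet is entirely lost. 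So the lower bound on $\liminf_\epsilon\sigma_i^\epsilon(K_j^\epsilon)$ does not come from the three hypotheses you list; you must use the partition structure. The repair is short: for each $j$, enlarging $K_j$ by $\delta$ and using the Portmanteau inequality for closed sets gives $\limsup_\epsilon\sigma_i^\epsilon(K_j^\epsilon)\le\sigma_i((K_j+\delta B)\cap\partial K)$, and letting $\delta\to0$ yields $\limsup_\epsilon\sigma_i^\epsilon(K_j^\epsilon)\le\sigma_i(K_j\cap\partial K)$; on the other hand $\sum_j\sigma_i^\epsilon(K_j^\epsilon)=\sigma_i^\epsilon(K)\to\sigma_i(\partial K)$ (the walls are Lebesgue-null), while your wall-trace lemma gives $\sum_j\sigma_i(K_j\cap\partial K)=\sigma_i(\partial K)$; together these force $\sigma_i^\epsilon(K_j^\epsilon)\to\sigma_i(K_j\cap\partial K)$ for every $j$. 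Two smaller corrections: the compactness you invoke is not that of the configuration space ($F_n(L)$ is not compact, with or without normalization); take Blaschke-selection limits of the cells themselves, together with limits of separating hyperplanes, to get the limit convex partition. And the ``preliminary approximation by positive densities'' is both unnecessary and dangerous (as an approximation it would re-raise the very limit problem you are solving): if $\mu_m(K)=0$ its equipartition condition is vacuous, so you may simply replace $\mu_m$ by the volume of $K$; once $\mu_m(K)>0$, absolute continuity alone gives that $\mu_m(K_j)=\mu_m(K)/n>0$ forces $K_j$ to have nonempty interior, which is all your wall-trace lemma needs.
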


In Section~\ref{section:one-dim} we show a result about upper envelopes of families of analytic functions that was inspired by the Alon--Akiyama splitting necklace theorem.

In Section~\ref{section:top} we provide a detailed proof of the Borsuk--Ulam-type statement (Theorem~\ref{top} below) that is the main topological tool used in this paper:



Let $F_n(\mathbb R^d)$ be the space of ordered $n$-tuples of pairwise distinct points in $\mathbb R^d$, i.e., $F_n(\mathbb R^d):=\{(x_1,x_2,\ldots x_n)\in \mathbb{R}^{nd}: x_i\neq x_j \textrm{ for all } i\neq j\}$. This is the classical \emph{configuration space}. The symmetric group $\Sigma_n$ acts naturally on $F_n(\mathbb R^d)$ by permuting the points in a tuple and on $\mathbb{R}^n$ by permuting the coordinate axes.  If we restrict this action on $\mathbb{R}^n$ to the orthogonal complement of the diagonal we obtain a free action of $\Sigma_n$.  
This complement of the diagonal consists of the vectors in $\mathbb R^n$ with zero coordinate sum; we denote this $(n-1)$-dimensional representation of $\Sigma_n$ by $\alpha_n$.

\begin{theorem}[D.~Fuchs, V.~Vassiliev, R.~Karasev]
\label{top}
Let $p$ be a prime, $k$ a positive integer, and $n=p^k$.  For any $\Sigma_n$-equivariant map $f\colon F_n(\Reals^d) \to \alpha_n^{\oplus (d-1)}$, there exists a configuration $\bar x\in  F_n(\Reals^d)$, such that $f(\bar x)=0$. 
\end{theorem}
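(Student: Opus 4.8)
I would run the standard configuration-space/test-map argument: reduce the nonexistence of a nowhere-zero equivariant map to the nonvanishing of an Euler class, and then recognize that Euler class as a power of a Dickson invariant. Suppose, for contradiction, that $f\colon F_n(\Reals^d)\to W:=\alpha_n^{\oplus(d-1)}$ is $\Sigma_n$-equivariant and nowhere zero. Since $\Sigma_n$ acts freely on $F_n(\Reals^d)$ (a permutation fixing a tuple of distinct points is the identity), I form the vector bundle $\xi:=F_n(\Reals^d)\times_{\Sigma_n}W$ over $B:=F_n(\Reals^d)/\Sigma_n$; the map $f$ is the same data as a nowhere-zero section of $\xi$, so its mod-$p$ Euler class — with coefficients in the orientation local system of $\xi$ — vanishes in $H^{(n-1)(d-1)}(B)$. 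It therefore suffices to prove that this Euler class $e(\xi)$ is nonzero.

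This is where the hypothesis $n=p^k$ enters. I identify $[n]=\{1,\dots,n\}$ with the group $H:=(\ZZ/p)^k$ acting on itself by translations; this action is free and transitive, so the induced $H$-action on $F_n(\Reals^d)$ is free. Pulling $\xi$ back along the covering $F_n(\Reals^d)/H\to B$, it is enough to show that $e(\xi)|_H=e\bigl(F_n(\Reals^d)\times_H W\bigr)\ne 0$ in $H^*\bigl(F_n(\Reals^d)/H;\mathbb F_p\bigr)$ (over $H$ the representation $W$ is orientable and the coefficient twisting becomes trivial). Writing $c\colon F_n(\Reals^d)/H\to BH$ for the classifying map of the free $H$-action, this Euler class equals $c^*(\bar e)$, where $\bar e:=e(EH\times_H W)\in H^*(BH;\mathbb F_p)$. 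As an $H$-module, $\alpha_n$ is the reduced regular representation $\Reals[H]/\Reals$, hence the sum of the $n-1$ nontrivial characters of $H$ (over $\Reals$ for $p=2$, in conjugate pairs for odd $p$); multiplying the corresponding Euler classes identifies $\bar e$, up to a nonzero scalar, with a positive power of the top Dickson invariant of $H^*(BH;\mathbb F_p)$ — for $p=2$ it is $\bigl(\prod_{0\ne a\in\mathbb F_2^k}(a_1x_1+\dots+a_kx_k)\bigr)^{d-1}$, and for odd $p$ the analogous product over the $1$-dimensional subspaces of $\mathbb F_p^k$, raised to a positive power. In particular $\bar e\ne 0$.

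What remains — and the step I expect to be the main obstacle — is to show that $\bar e$ does not die under $c^*$, i.e., that $c^*(\bar e)\ne 0$ in $H^*\bigl(F_n(\Reals^d)/H;\mathbb F_p\bigr)$. Equivalently, in the Serre spectral sequence of the fibration $F_n(\Reals^d)\to F_n(\Reals^d)/H\to BH$ one must show that the class $\bar e\in E_2^{(n-1)(d-1),0}=H^{(n-1)(d-1)}(BH;\mathbb F_p)$ — which supports no outgoing differential, as it lies in the bottom row — is not hit by any differential $d_r\colon E_r^{(n-1)(d-1)-r,\,r-1}\to E_r^{(n-1)(d-1),0}$. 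The relevant input is the mod-$p$ cohomology of configuration spaces as an $H$-module: since $H^t\bigl(F_n(\Reals^d);\mathbb F_p\bigr)$ vanishes for $t>(n-1)(d-1)$ — iterate the fibration $F_n\to F_{n-1}\to\dots\to\Reals^d$, whose fibers are wedges of $(d-1)$-spheres — only finitely many differentials can interfere, and one has to verify that none of them carries a class onto $\bar e$. For $d=2$ this is essentially the content of Fuchs's and Vassiliev's computations of braid-group cohomology with twisted coefficients; the general-$d$ case proceeds along the same lines, and this is what Section~\ref{section:top} carries out. Finally, I would point out that the prime-power hypothesis is unavoidable here: only when $n=p^k$ does $[n]$ admit a free transitive action of an elementary abelian $p$-group, which is precisely what makes $\bar e$ a nonzero Dickson class — and indeed the statement fails for $n$ with two distinct prime divisors.
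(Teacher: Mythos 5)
The core of your argument is missing. After the (correct and standard) reduction to showing that the twisted Euler class of $\xi=F_n(\Reals^d)\times_{\Sigma_n}\alpha_n^{\oplus(d-1)}$ is nonzero, and the (also correct) identification of its restriction $\bar e$ over $BH$, $H=(\ZZ/p)^k$, with a power of the top Dickson invariant, everything hinges on the step you yourself flag as ``the main obstacle'': proving $c^*(\bar e)\neq 0$ in $H^*\bigl(F_n(\Reals^d)/H;\mathbb F_p\bigr)$, i.e., that this Dickson power does not lie in the kernel of $H^*(BH;\mathbb F_p)\to H^*_H(F_n(\Reals^d);\mathbb F_p)$. That is not a routine spectral-sequence check: $H^*(F_n(\Reals^d);\mathbb F_p)$ is large, the Euler class sits in the top possible degree $(n-1)(d-1)$, and many differentials can hit the bottom row, so ``only finitely many differentials can interfere'' proves nothing. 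Indeed, for $n$ with two distinct prime divisors the analogous class \emph{does} die (this is exactly the Blagojevi\'c--Ziegler converse cited in the paper), so any honest argument must exploit $n=p^k$ at this stage, not merely in setting up $H$. Your appeal to ``Fuchs's and Vassiliev's computations'' and to Section~\ref{section:top} is circular in a blind proof, and moreover inaccurate: Fuchs and Vassiliev compute braid-group ($d=2$, full $\Sigma_n$, twisted coefficients) cohomology, not the $H$-equivariant Serre spectral sequence you propose. The route through the elementary abelian subgroup can be made to work (this is essentially the Fadell--Husseini index computation of $F_{p^k}(\Reals^d)$ for $(\ZZ/p)^k$ carried out later by Blagojevi\'c--L\"uck--Ziegler, using restriction-to-subgroups/fixed-point arguments in the spirit of \"Ozaydin), but that computation is the substance of the theorem and is absent here.

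For contrast, the paper does not restrict to $H$ at all: it works with the Fuchs cell decomposition of the one-point compactification $F_n'(\Reals^d)$, exhibits the explicit section ``forget the last coordinate of each point'' whose transversal zero set is the unique $(n+d-1)$-cell $e$, shows $[e]\neq 0$ in $H_{n+d-1}(F_n'(\Reals^d)/\Sigma_n,\pt;\widehat\ZZ)$ because all boundary coefficients are binomial coefficients $\binom{n}{n_1}$, $0<n_1<n$, divisible by $p$ when $n=p^k$, and then shows zero sets of any two generic equivariant sections are bordant (handling noncompactness via compact-support homology or a relative pair $(M,\partial M)$). You would either need to supply the index computation your reduction requires, or switch to an argument of this homological type.
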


Our proof is a variation of the original one found by D.~Fuchs in the case $d=2$, $p=2$, by V.~Vassiliev in the case $d=2$ for any prime~$p$, and by the first-named author of this paper for the prime power case. The proof presented here avoids using the Euler class and Poincar\'e duality. Instead, it relies completely on homological considerations. While we provide background on homology with twisted coefficients and give a very detailed description of the Fuchs cell decomposition of configuration space, the actual proof of this theorem is rather short. One only needs to exhibit a generic section whose zero set is not null homologous in the compact support homology with twisted coefficients and then show that any two such zero sets are homologous. 

In \cite{bz2012} P.~Blagojevi\'c and G.~Ziegler give another proof of this statement using obstruction theory. They retract the configuration space onto a compact polyhedron contained in it. Their technique has the advantage of providing a converse result, namely, if $n$ is not a prime power, then there exists a nonvanishing equivariant map, $f\colon F_n(\Reals^d) \to \alpha_n^{\oplus (d-1)}$. This observation shows that the open cases of the Nandakumar--Ramana Rao conjecture ($d=2$, $n=6$, for example) are resistant to known topological methods.

\section{Equipartitions via Optimal Transport}
\label{section:transport}

In this section we sketch the relation of power diagrams to optimal transport, and to the problems at hand. 

Let $\mu$ and $\nu$ be measures on $\Reals^d$.  For a transformation
$T\colon \Reals^d \to \Reals^d$, let $T_\#(\mu)$ denote the push forward of
$\mu$. Consider the following Monge-Kantorovich \emph{optimal
  transport problem} with quadratic cost
\[
  \inf_{\substack{T \colon \Reals^d \to \Reals^d\\T_\#(\mu)=\nu}} \int |x-T(x)|^2\; d\mu(x).
\]
If we choose $\mu$ to be an absolutely continuous probability measure and $\nu$ to be a probability measure supported on a finite set, then this infimum is achieved by a measurable transformation~$T_\mu^\nu$.  Moreover, the map can be described by a generalized Voronoi diagram based at the support of $\nu$. To be precise, for an $n$-tuple of pairwise distinct points (\emph{sites}) $x_1,x_2, \ldots x_n \in \Reals^d$ with corresponding \emph{radii} $r_1,r_2, \ldots r_n \in \Reals$, the \emph{power diagram} is a tessellation of $\Reals^d$ which generalizes the Dirichlet--Voronoi diagram. A point $x\in \Reals^d$ is assigned to the cell $C_i$ corresponding to the site $x_i$ if $f_i(x)=|x-x_i|^2-r_i$ is minimal among all $i$'s.\footnote{The term ``power'' comes from Euclidean geometry. Recall that the power of a point $p$ with respect to a circle of radius $r$ and center $y$, that does not contain $p$, is $|p-y|^2-r^2$.} Notice that cells can be empty and when they are not, sites need not be contained in their corresponding cells. The following theorem from \cite{aha1998,bren1991,mac1995} relates optimal transport to power diagrams:

\begin{theorem}[L.~Kantorovich, Y.~Brenier, R.~McCann]
\label{aha1998}
Let $\mu$ be an absolutely continuous probability measure and $\nu$ be a convex combination of delta masses at $n$~distinct points $\langle x_1,x_2, \ldots, x_n\rangle$ in $\Reals^d$ 
There exists a set of radii $\langle r_1, r_2, \ldots, r_n \rangle$ such that the power diagram $\langle C_1, C_2, \ldots, C_n\rangle$ defined by $\langle x_1,x_2, \ldots, x_n\rangle$ and $\langle r_1, r_2, \ldots, r_n \rangle$ gives the unique solution to the optimal transport problem, i.e.,
\[
(T_\mu^\nu)^{-1} (x_i)=C_i ,
\]
up to a set of measure zero.
\end{theorem}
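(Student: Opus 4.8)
The plan is to use Kantorovich duality to replace the infinite-dimensional transport problem by a finite-dimensional \emph{concave} maximization over the $n$ radii, and then to read off the power diagram as the primal optimum associated to the dual maximizer. Write $\nu=\sum_{i=1}^n\lambda_i\delta_{x_i}$ with $\lambda_i>0$ (zero-mass sites may be discarded) and $\sum_i\lambda_i=1$, and assume, as is needed for the cost to be finite, that $\mu$ has finite second moment (in the applications $\mu$ is supported on a compact convex body). For the quadratic cost $c(x,y)=|x-y|^2$, Kantorovich duality gives
\[
  \inf_{T_\#\mu=\nu}\int|x-T(x)|^2\,d\mu
  =\sup\Bigl\{\int\varphi\,d\mu+\int\psi\,d\nu\ :\ \varphi(x)+\psi(y)\le|x-y|^2\Bigr\}.
\]
Since $\nu$ is carried by the finite set $\{x_1,\dots,x_n\}$, a dual variable $\psi$ is simply a vector $r=(r_1,\dots,r_n)\in\Reals^n$, via $r_i:=\psi(x_i)$, and for fixed $r$ the best $\varphi$ is the $c$-transform $\varphi_r(x):=\min_i\bigl(|x-x_i|^2-r_i\bigr)=\min_i f_i(x)$. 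The dual problem thus becomes the maximization over $r\in\Reals^n$ of
\[
  \Phi(r):=\int_{\Reals^d}\min_i\bigl(|x-x_i|^2-r_i\bigr)\,d\mu(x)+\sum_{i=1}^n\lambda_i r_i,
\]
a concave function ($\mu$-average of minima of affine functions of $r$, plus a linear term).

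Next I would record the relevant structure. Each cell $C_i=\{x:f_i(x)\le f_j(x)\text{ for all }j\}$ is a convex polyhedron, since $f_i(x)-f_j(x)=-2\langle x,\,x_i-x_j\rangle+(|x_i|^2-|x_j|^2)-(r_i-r_j)$ is affine in $x$; the loci $\{f_i=f_j\}$ lie in hyperplanes, hence are $\mu$-null by absolute continuity, so the $C_i$ tessellate $\Reals^d$ up to a $\mu$-null set. Because the only nonsmoothness of the integrand occurs on these null loci, one may differentiate under the integral sign: $\Phi$ is differentiable (hence, being concave, of class $C^1$) with $\partial\Phi/\partial r_i=\lambda_i-\mu(C_i(r))$. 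Moreover $\Phi$ is invariant under $r\mapsto r+(t,\dots,t)$, and on the hyperplane $\{\sum_i r_i=0\}$ it is coercive: if $r^{(k)}\to\infty$ in this hyperplane along a unit direction $v$ (so $\sum_i v_i=0$, hence $v$ is nonconstant), then on a set of full $\mu$-measure the minimum in $\varphi_{r^{(k)}}$ is eventually attained only at indices where $v_i$ is maximal, giving $\Phi(r^{(k)})=O(1)+|r^{(k)}|\sum_i\lambda_i(v_i-\max_\ell v_\ell)\to-\infty$, since some index with $\lambda_i>0$ has $v_i<\max_\ell v_\ell$. Therefore $\Phi$ attains its maximum at some $r^\ast$.

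At $r^\ast$ the gradient of $\Phi$ restricted to $\{\sum_i r_i=0\}$ vanishes, so $\lambda_i-\mu(C_i(r^\ast))$ is independent of $i$; summing over $i$ and using $\sum_i\lambda_i=\sum_i\mu(C_i)=1$ forces $\mu(C_i(r^\ast))=\lambda_i$ for all $i$. Hence the map $T$ sending all of $C_i(r^\ast)$ to $x_i$ is a legitimate transport map pushing $\mu$ to $\nu$. It is optimal: for $x\in C_i$ we have $\varphi_{r^\ast}(x)+r^\ast_i=f_i(x)+r^\ast_i=|x-x_i|^2=|x-T(x)|^2$, so $\int|x-T(x)|^2\,d\mu=\int\varphi_{r^\ast}\,d\mu+\sum_i\lambda_i r^\ast_i=\Phi(r^\ast)$, which equals the dual supremum and hence, by weak duality, the primal infimum. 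Thus the power diagram $\langle C_1(r^\ast),\dots,C_n(r^\ast)\rangle$ realizes an optimal transport. Finally, since $\mu$ is absolutely continuous and both measures have finite second moment, the Brenier--McCann theorem asserts that the optimal plan for the quadratic cost is unique and is induced by a map $T_\mu^\nu$, determined $\mu$-almost everywhere; our $T$ is such a map, so $T_\mu^\nu=T$ up to a $\mu$-null set, i.e.\ $(T_\mu^\nu)^{-1}(x_i)=C_i(r^\ast)$ up to measure zero.

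The main obstacle is the middle step: showing that the Kantorovich functional $\Phi$ attains its maximum on the quotient $\{\sum_i r_i=0\}$ and is regular enough there to justify the first-order condition. This is exactly where absolute continuity of $\mu$ is essential — it makes the hyperplanes on which cells meet $\mu$-null, so that $\Phi$ is $C^1$ — and the coercivity estimate, though elementary, is the genuine quantitative content that produces the mass-balance equations $\mu(C_i)=\lambda_i$. The steps before it are formal duality bookkeeping, and the step after it is a citation of the Brenier--McCann uniqueness theorem.
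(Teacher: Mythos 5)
The paper does not actually prove this statement: it is quoted as a known result of Kantorovich, Brenier, and McCann, with the power-diagram formulation taken from Aurenhammer--Hoffmann--Aronov and the continuity/uniqueness assertions deferred to Villani's book. Your duality argument is essentially the standard semi-discrete optimal transport proof that underlies those references, and it is correct: the reduction of the dual to the concave functional $\Phi(r)=\int\min_i\bigl(|x-x_i|^2-r_i\bigr)\,d\mu+\sum_i\lambda_i r_i$, the gradient formula $\partial\Phi/\partial r_i=\lambda_i-\mu(C_i(r))$ (justified by dominated convergence, since the difference quotients are bounded by $1$ and the tie sets $\{f_i=f_j\}$ are $\mu$-null), ray-coercivity on $\{\sum_i r_i=0\}$ plus concavity to get a maximizer, the mass-balance equations from the first-order condition, and the zero duality gap showing the induced map is optimal. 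Two small points are worth tightening. First, your coercivity estimate needs a word about unbounded support: split $\mu$ into a large compact piece, where for $t$ large the minimum is attained only at indices maximizing $v_i$, and a small tail, where you bound $\varphi_{tv}(x)\le|x-x_{i_0}|^2-t v_{i_0}$ for a maximizing index $i_0$ using the finite second moment; this is exactly where your standing assumption (automatic in the paper's application, where $\mu$ lives on a convex body) enters. Second, you do not really need to invoke the Brenier--McCann theorem for uniqueness: since you have exhibited a feasible dual pair and a plan with equal values, any optimal plan must be concentrated on the equality set $\{(x,x_i):\varphi_{r^\ast}(x)+r^\ast_i=|x-x_i|^2\}=\bigcup_i C_i(r^\ast)\times\{x_i\}$, and as the cells overlap only in a $\mu$-null set, every optimal plan is induced by your map $T$ $\mu$-almost everywhere; this makes the proof self-contained (and also handles the cosmetic issue of sites with zero weight, whose cells can be emptied by taking $r_i$ sufficiently negative). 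So your route buys a complete proof where the paper offers only a citation, at the cost of the moment hypothesis you state explicitly.
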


It is not hard to see that the parameters $T_\mu^\nu$, $\{r_i\}$, and the cells $C_i$ are essentially uniquely determined and depend continuously on the measure $\nu$, see \cite{vil2009} for a thorough exposition.

\subsection{The idea of the optimal transport approach to equipartitions}

Consider an ordered $n$-tuple $\bar{x}=\langle x_1,x_2, \ldots x_n\rangle$ of pairwise distinct points and a convex body~$K$.  The solution of the optimal transport problem in which the source measure is the volume of~$K$ and the target measure is $\nu_{\bar{x}}=\frac{1}{n}\sum \delta_{x_i}$ induces a convex partition of $K$ into convex sets of equal volume.  As the points $\langle x_1,x_2, \ldots x_n\rangle$ move, the surface areas of the corresponding cells change continuously. Since the measure  $\nu_{\bar{x}}$ is $\Sigma_n$-invariant, the surface areas define a $\Sigma_n$-equivariant map to $\Reals^{n}$ (with the permutation action of $\Sigma_n$) and by Theorem~\ref{top} this map intersects the diagonal, which is the kernel of the natural projection $\Reals^n\to \alpha_n$. 

\section{Equipartitions via measure-separating functions}
\label{section:sep-functions}

Consider a compact topological space $X$ with a Borel probability measure $\mu$. Let $C(X)$ denote the set of real-valued continuous functions on $X$. 


\begin{definition}
A finite-dimensional linear subspace $L\subset C(X)$ is called \emph{measure separating}, if, for any $f\neq g\in L$, the measure of the set 
\[
  e(f, g) = \{x\in X : f(x) = g(x)\}
\]
is zero.
\end{definition}

In particular, if $X$ is a compact subset of $\mathbb R^d$, or 
of other real analytic smooth manifold, such that $X=\cl(\inte X)$ and $\mu$ is any absolutely continuous measure, then any finite-dimensional space of real-analytic functions is measure separating, because the sets $e(f,g)$ always have dimension less than $d$ and therefore measure zero. 

For a finite subset of a measure-separating subspace we define a partition of~$X$, as follows.  Suppose $F=\{u_1, \ldots, u_n\} \subset C(X)$ is a family of functions such that $\mu (e(u_i,u_j)) = 0$ for all $i\neq j$. The sets (some of which may be empty)
\[
  V_i = \{x\in X : \forall j\neq i\ u_i(x)\ge u_j(x)\}
\]
have a zero-measure overlap, so they define a partition $P(F)$ of $X$. Notice that in the case where $u_i$ are linear functions on $\mathbb R^d$, $P(F)$ is a power diagram.
As a warm up, we prove a result on splitting measures that is enough to derive Corollary~\ref{ham}:

\begin{theorem}
\label{gen-spl}
Suppose $L$ is a measure-separating subspace of $C(X)$ of dimension $d+1$, $\mu_1,\ldots, \mu_d$ are absolutely continuous (with respect to the original measure on $X$) finite measures on $X$. Then, for any prime power $n$, there exists an $n$-element subset $F\subset L$ such that, for every $i=1,\ldots,d$, 
the family $P(F)$ partitions the measure $\mu_i$ into $n$ equal parts.
\end{theorem}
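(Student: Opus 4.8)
The plan is to reduce the statement to the topological Theorem~\ref{top} applied in dimension $d+1$. Fix a basis $e_0,e_1,\dots,e_d$ of $L$, which identifies $L$ with $\Reals^{d+1}$: a function $u=\sum_{\ell=0}^{d} a_\ell e_\ell\in L$ corresponds to its coefficient vector $a=(a_0,\dots,a_d)$. An ordered $n$-tuple $\bar u=(u_1,\dots,u_n)$ of pairwise \emph{distinct} elements of $L$ then corresponds to a point $\bar a=(a^{(1)},\dots,a^{(n)})$ of the configuration space $F_n(\Reals^{d+1})$, and conversely. For such $\bar a$ the functions $u_i$ are pairwise distinct, so by the measure-separating hypothesis (together with $\mu_i\ll\mu$) every overlap set $e(u_i,u_j)$ is $\mu_i$-null, and hence the cells $V_1,\dots,V_n$ of $P(\{u_1,\dots,u_n\})$ partition $X$ up to a $\mu_i$-null set for each $i$.

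Next I would define a map $f\colon F_n(\Reals^{d+1})\to\alpha_n^{\oplus d}$ whose $i$-th block records the deviation of $\mu_i$ from being equidistributed among the cells:
\[
  f(\bar a)_i=\Bigl(\mu_i(V_1)-\tfrac{\mu_i(X)}{n},\ \dots,\ \mu_i(V_n)-\tfrac{\mu_i(X)}{n}\Bigr),\qquad i=1,\dots,d.
\]
Since the $V_j$ cover $X$ with $\mu_i$-null overlaps, $\sum_{j}\mu_i(V_j)=\mu_i(X)$, so each block has zero coordinate sum and genuinely lies in $\alpha_n$. The map is $\Sigma_n$-equivariant: permuting the points of $\bar a$ permutes the functions $u_j$, hence the cells $V_j$, and correspondingly permutes the coordinates inside each block of $\alpha_n^{\oplus d}$, which is exactly the prescribed $\Sigma_n$-action.

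The one point that needs genuine care is the \emph{continuity} of $f$, and this is where the measure-separating hypothesis does its work (I expect this to be the main, though not deep, obstacle). If $\bar a^{(m)}\to\bar a$ in $F_n(\Reals^{d+1})$, then the corresponding functions converge uniformly on the compact space $X$ because $L$ is finite-dimensional. For every $x$ that is not a tie, i.e.\ for which a single index $i$ strictly maximizes $u_i(x)$ among $u_1(x),\dots,u_n(x)$, one has $x\in V_i^{(m)}$ and $x\notin V_j^{(m)}$ for $j\ne i$ once $m$ is large; thus $\mathbf{1}_{V_j^{(m)}}\to\mathbf{1}_{V_j}$ pointwise off $\bigcup_{i\ne j}e(u_i,u_j)$, a set that is $\mu_i$-null for every $i$. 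Dominated convergence then gives $\mu_i(V_j^{(m)})\to\mu_i(V_j)$, so $f$ is continuous.

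Finally I would invoke Theorem~\ref{top} with $d$ replaced by $d+1$: since $n$ is a prime power it applies to the $\Sigma_n$-equivariant map $f\colon F_n(\Reals^{d+1})\to\alpha_n^{\oplus((d+1)-1)}=\alpha_n^{\oplus d}$ and produces a configuration $\bar a\in F_n(\Reals^{d+1})$ with $f(\bar a)=0$. For the associated $n$-element subset $F=\{u_1,\dots,u_n\}\subset L$ (the $u_i$ are distinct because $\bar a\in F_n(\Reals^{d+1})$) this reads $\mu_i(V_j)=\mu_i(X)/n$ for all $i$ and $j$; that is, $P(F)$ splits each measure $\mu_i$ into $n$ equal parts, which is the assertion of the theorem.
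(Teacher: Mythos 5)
Your proposal is correct and follows essentially the same route as the paper: identify $F_n(L)$ with $F_n(\Reals^{d+1})$, record the deviations $\mu_i(V_j)-\mu_i(X)/n$ as a $\Sigma_n$-equivariant map into $\alpha_n^{\oplus d}$, and apply Theorem~\ref{top}. The only difference is that you spell out the continuity argument (uniform convergence on $X$ plus dominated convergence off the null tie sets), which the paper merely asserts follows from absolute continuity and the measure-separating property.
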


\begin{proof}
Let $F \in F_n(L)$; as above the configuration space has the natural action of the symmetric group $\Sg_n$ by relabeling the functions.

For every $i=1,\ldots, d$ and $P(F) = \{V_1,\ldots, V_n\}$ the values 
\[
  \mu_i(V_1) - \frac{1}{n}, \ldots, \mu_i(V_n) - \frac{1}{n}
\]
define a map $f_i \colon F_n(L)\to \alpha_n\subset\Reals^n$ (because the sums of coordinates are adjusted to be zero).  This map is $\Sg_n$-equivariant, and from the absolute continuity and the measure-separation property we deduce that the map $f_i$ is continuous.

Consider the $\Sigma_n$-equivariant direct sum map 
\[
  f=f_1\oplus\dots\oplus f_d \colon F_n(L) \to \alpha_n^{\oplus d}.
\]
By Theorem~\ref{top}, it has to vanish at some configuration, which is precisely what we needed to prove.
\end{proof}

Theorems~\ref{extra} and~\ref{vol-area-spl} do not follow directly from the above theorem. The problem lies in the fact that cells in power diagrams might vanish giving rise to 
discontinuities of the constructed maps. However, we only need a minor modification to the previous proof.

\begin{proof}[Proof of Theorem~\ref{vol-area-spl}]
Let $\mathbb S^d$, $\Reals^d$, or $\mathbb H^d$ be embedded in $\Reals^{d+1}$ in the usual way. That is, the round sphere $\mathbb S^d=\{x \in \Reals^{d+1}: \langle x,x\rangle=1\}$, the affine hyperplane $\Reals^d=\{x\in \Reals^{d+1}: x_{d+1}=1\}$, or the pseudosphere $\mathbb H^d=\{x \in \Reals^{d+1}: \langle x,x\rangle_L=-1\}$, where $\langle \cdot,\cdot\rangle_L$ denotes the Lorenzian quadratic form $\sum_{i=1}^d x_i^2-x_{d+1}^2$. 

Let $L$ be the $(d+1)$-dimensional space of real-valued homogeneous linear functions on $\mathbb R^{d+1}$ restricted to $M=\mathbb S^d,\Reals^d, \mathbb H^d$ respectively.  Notice that the crucial fact is that the intersection of $M$ with any zero set of $f\in L$ is a geodesic hyperplane in $M$. 

For any $F\in F_n(L)$ consider its partition $P(F) = \{V_1,\ldots, V_n\}$, restricted to $M$. This is actually a partition into convex parts, since the walls of the partitions are geodesic hyperplanes.

Define the maps from $F_n(L)$ to $\alpha_n$ by 
\[
  f_i \colon F\mapsto \left(\mu_i(V_1\cap K) - \frac{1}{n}\mu_i(K), \ldots, \mu_i(V_n\cap K) - \frac{1}{n}\mu_i(K)\right),
\]
for $i=1,\ldots, m$.
They are continuous on the whole $F_n(L)$. Let $Z\subset F_n(L)$ consist of configurations $F$ such that $f_i(F)=0$ for all $i=1,\ldots, m$. For $F\in Z$ the sets $V_1(F), \ldots, V_n(F)$ are nonempty and have nonempty interior, where we put $V_j(F) := V_j(F)\cap K$, for brevity.


Now the remaining maps
\[
  f_i \colon F\mapsto \left(\sigma_{i-m}(V_1\cap\partial K) - \frac{1}{n}\sigma_{i-m}(\partial K), \ldots, \sigma_{i-m}(V_n\cap\partial K) - \frac{1}{n}\sigma_{i-m}(\partial K)\right),
\]
are defined on $Z$, for $i=m+1, \ldots, d$. Note that for $F\in Z$ (and in some neighborhood of $U\supset Z$) any two convex sets $V_j(F), V_l(F)$ are separated by a hyperplane $u_j(x) = u_l(x)$, and since $V_j(F)$ and $V_l(F)$ have nonempty interiors this hyperplane is transverse to $\partial K$. Therefore the sets $V_j(F)\cap\partial K$ depend continuously on $F\in U$ and the rest of the proof for a prime power $n$ is similar to the previous proof.

Thus the case when $n$ is a prime power is done.  Otherwise, we proceed as in the proof of Corollary~\ref{ham}, that is we write  $n=p_1^{\alpha_1} p_2^{\alpha_2}...p_k^{\alpha_k}$ and at the $(i+1)$th step we apply the theorem with $p_{i+1}^{\alpha_{i+1}}$ to each of the convex pieces obtained at the $i$th step and appropriately renormalized measures. 
\end{proof}
 
\begin{proof}[Proof of Theorems~\ref{extra} and~\ref{gro}]
Generally, we proceed as in the previous proof. For $i=1, \ldots, d-1$, we define the maps 
\[
  f_i \colon F_n(L) \to \alpha_n
\]
as follows. For $F\in F_n(L)$ and $P(F) = \{V_1,\ldots, V_n\}$ put 
\[
  m_i(F) = \frac{1}{n} \sum_{j=1}^n \phi_i(V_j(F)\cap K),
\]
and
\[
  f_i \colon F \mapsto \big(\phi_i(V_1(F)\cap K), \ldots, \phi_i(V_n(F)\cap K)\big) - \big(m_i(F), \ldots, m_i(F)\big).
\]
Define the map $f_d$ as before
\[
  f_d \colon F\mapsto \left(\mu(V_1\cap K) - \frac{1}{n}\mu(K), \ldots, \mu(V_n\cap K) - \frac{1}{n}\mu(K)\right).
\]

Note that the maps $f_1,\ldots, f_{d-1}$ are defined only for $F$ such that all the sets $\{V_j(F) \cap K \}_{j=1}^n$ are nonempty. Moreover, these maps may be discontinuous. To correct this, consider the closed subset $Z\subseteq F_n(L)$ consisting of configurations $F$ such that $f_d(F)=0$. For $F\in Z$ the sets $V_j(F) \cap K$ have equal measures, and therefore they are convex compact sets with nonempty interiors (convex bodies), and they depend continuously (in the Hausdorff metric) on $F$, because their facets depend continuously on $F$. Now assume that the maps $f_1, \ldots, f_{d-1} \colon Z\to \alpha_n$ are defined by the above formulas, and extend each map $f_i$ ($1\le i \le d-1$)  separately to a continuous $\Sg_n$-equivariant map $f_i \colon F_n(L)\to \alpha_n$.  This can be done because we extend them from a closed subspace to the whole manifold. As before, Theorem~\ref{top} applied to $f$ yields the result. For the proof of Theorem~\ref{gro}, put $\phi_i:= g_i(c)$ and the same argument applies.
\end{proof}

\section{Measures on the segment and complexity of upper envelopes}
\label{section:one-dim}

Recall the ``splitting necklace'' theorem from \cite{alon1987} in its continuous version:

\begin{theorem}
\label{alon-spl}
Suppose we are given absolutely continuous finite measures $\mu_1,\ldots, \mu_d$ on a segment $[0,1]$. For an integer $n\ge 2$ put $N=d(n-1)+1$. Then $[0,1]$ can be partitioned into $N$ segments $I_1, \ldots, I_N$, and the family $\mathcal F = \{I_i\}_{i=1,\ldots, N}$ can be partitioned into $n$ subfamilies $\mathcal F_1,\ldots, \mathcal F_n$, so that, for any $i=1,\ldots, d$ and $j=1,\ldots, n$,
\[
  \mu_i\left(\bigcup \mathcal F_j\right) = \frac{1}{n}\mu_i([0, 1]).
\]
\end{theorem}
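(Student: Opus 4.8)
The plan is to derive Theorem~\ref{alon-spl} from the measure-separating machinery of Section~\ref{section:sep-functions}, applied to the segment with the space of low-degree polynomials, and then to bound the number of arcs used by the partition that comes out. First I would reduce to the case where $n$ is a prime power. Writing $n=q_1q_2\cdots q_r$ as a product of prime powers, one applies the prime-power case to $[0,1]$ with all $d$ measures and $q_1$ parts, then separately to each of the $q_1$ resulting groups --- each a finite union of arcs --- with $q_2$ parts and the appropriately restricted measures, and so on. When a group made of $m$ arcs is refined, a new cut that falls at one of its $m-1$ junction points costs nothing, so the total number of cuts telescopes:
\[
  d(q_1-1)+q_1\,d(q_2-1)+q_1q_2\,d(q_3-1)+\cdots+q_1\cdots q_{r-1}\,d(q_r-1)=d(n-1).
\]
This is the same iteration used in the proof of Corollary~\ref{ham}.

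For the prime-power case, take $X=[0,1]$ (or, at later steps, the relevant finite union of arcs) with Lebesgue measure, and let $L\subset C(X)$ be the $(d+1)$-dimensional space of polynomials of degree at most $d$. Since $L$ consists of real-analytic functions and $X=\cl(\inte X)$, it is measure separating, and since each $\mu_i$ is absolutely continuous, Theorem~\ref{gen-spl} produces an $n$-element family $F=\{u_1,\dots,u_n\}\subset L$ whose partition $P(F)=\{V_1,\dots,V_n\}$ splits every $\mu_i$ into $n$ equal parts. Each cell $V_j=\bigcap_{i\ne j}\{x:u_j(x)\ge u_i(x)\}$ is a finite union of closed subintervals, because every difference $u_j-u_i$ is a polynomial of degree $\le d$ with finitely many zeros. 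Taking the $I_\ell$ to be the connected components of the $V_j$'s --- with shared endpoints assigned consistently --- and $\mathcal F_j$ to be the components contained in $V_j$, we obtain a partition of $[0,1]$ into arcs together with a partition of that family into $n$ subfamilies satisfying $\mu_i(\bigcup\mathcal F_j)=\frac1n\mu_i([0,1])$. If fewer than $N$ arcs appear, we insert dummy cuts inside existing arcs, which changes no subfamily's measure, so we may assume exactly $N$.

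Everything now rests on the combinatorial claim that the total number of these components is at most $N=d(n-1)+1$; equivalently, that the upper envelope $\max_i u_i$ of $n$ univariate polynomials of degree $\le d$ has at most $d(n-1)$ breakpoints. This is precisely the upper-envelope estimate around which this section is built, and I expect it to be the main obstacle. It does not follow merely from the fact that any two of the $u_i$ meet at most $d$ times, since Davenport--Schinzel sequences of order $\ge 3$ grow superlinearly; one must genuinely use that the $u_i$ are global polynomials. The mechanism I would exploit is that the envelope $M=\max_i u_i$ can only bend \emph{upward}: at a breakpoint where $M$ passes from $u_i$ to $u_j$ one has $(u_i-u_j)(x)=0$ with $u_i-u_j\ge 0$ just to the left, hence $u_j'(x)\ge u_i'(x)$ and $M'_+\ge M'_-$. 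Feeding this into Rolle's theorem applied to the continuous piecewise-polynomial functions $M-u_j$, whose slopes also only jump up, controls how many times each $u_j$ can re-enter the envelope and yields the bound $d(n-1)$. In the analytic generality hinted at for this section one replaces ``degree $\le d$'' by a bound on the number of sign changes of the relevant differences, and the same reasoning applies; together with the prime-power reduction of the first paragraph this completes the proof.
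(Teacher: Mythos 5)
Your argument has a fatal gap at exactly the step you yourself flag as ``the main obstacle.'' The combinatorial claim on which everything rests --- that the upper envelope of $n$ univariate polynomials of degree at most $d$ has at most $d(n-1)$ breakpoints --- is precisely False Conjecture~\ref{pol-sup} in this paper, and it is false for every $d\ge 3$. Akopyan has a counterexample already for $d=3$, $n\ge 4$, and an unpublished construction of P.~Shor cited in \cite{agsh1995} shows that for $d=4$ the number of switch points can grow like $\Omega(n\alpha(n))$; more generally, Davenport--Schinzel theory gives superlinear growth in $n$ for every fixed $d\ge 3$, and these lower-bound constructions are realized by genuine global polynomials, so no argument exploiting only ``the $u_i$ are polynomials of degree $\le d$'' can rescue the linear bound. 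Your proposed mechanism does not close this gap: the observation that the envelope $M=\max_i u_i$ has $M'_+\ge M'_-$ at each breakpoint holds for the upper envelope of \emph{any} family of smooth functions (it is a local one-sided convexity at switch points only), and feeding it into Rolle's theorem for $M-u_j$ merely recovers the pairwise bound of at most $d$ crossings per pair, i.e., the Davenport--Schinzel setting whose envelope complexity is superlinear. So the prime-power case of your proof does not go through, and the reduction for general $n$ inherits the failure.

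It is also worth noting how this relates to the paper's treatment: Theorem~\ref{alon-spl} is not proved here at all --- it is Alon's splitting necklace theorem, quoted from \cite{alon1987}, whose proof is by different (topological) means and does not pass through upper envelopes of polynomials. The section you are drawing on runs the logic in the opposite direction: precisely because the reduction you attempt would, if valid, improve the tight bound $N=d(n-1)+1$, the paper concludes (its Theorem after False Conjecture~\ref{pol-sup}) that for prime-power $n$ any $(d+1)$-dimensional space of analytic functions contains $n$ functions whose upper envelope has at least $d(n-1)$ switch points. In short, your strategy is the one the paper explicitly discusses and refutes; a correct proof of Theorem~\ref{alon-spl} must follow Alon's original argument (or another route) rather than the measure-separating-functions machinery with polynomial envelopes.
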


Let us try to reduce Theorem~\ref{alon-spl} to Theorem~\ref{gen-spl}.

Take $L$ to be the set of polynomials of degree at most $d$ on the segment $[0, 1]$. In this case we obtain $n$ polynomials, the sets of the partition $P(F)$ are unions of several segments, and we have to show that the total number of segments does not exceed $d(n-1)+1$. This would follow from the following claim:

\begin{fcon}
\label{pol-sup}
Suppose $f_1,\ldots, f_n$ are polynomials of degree at most $d$, for $x\in\mathbb R$ denote
\[
  g(x) = \max\{f_1(x), \ldots, f_n(x)\}.
\]
Then $g(x)$ has at most $d(n-1)$ points of switching between a pair of $f_i$'s.
\end{fcon}

\begin{remark}
The function $g(x)$ is usually called the \emph{upper envelope} of the set of polynomials.
\end{remark}

The case of non-prime-power $n$ in the splitting necklace theorem would follow from this conjecture by iterating the splittings, as in the original proof of Theorem~\ref{alon-spl}.

This conjecture is obviously true as stated for $d=1$ or $n=2$.  The latter case gives Theorem~\ref{alon-spl} in the case $n=2^k$ by iterating, but this is the same as using the ham-sandwich theorem and therefore not too interesting. The case $d=2$ can also be done ``by hand,'' ordering the polynomials by the coefficient of $x^2$ and applying induction. 

But generally Conjecture~\ref{pol-sup} is false. Arseniy~Akopyan has constructed a counterexample for $d=3$, $n\ge 4$ (private communication). An unpublished construction of P.~Shor cited in \cite{agsh1995} shows that for $d=4$ the number of ``switch'' points may grow as $\Omega(n\alpha(n))$ in $n$, where $\alpha(n)$ is the inverse Ackermann function. In \cite{agsh1995} this problem was studied in a combinatorial setting. The sequence of ``switches'' between $n$ polynomials may be encoded as a word on $n$~letters with some restrictions depending on the degree $d$; such sequences are called \emph{Davenport--Schinzel sequences}. It is known \cite{agsh1995} that the maximum length of such a word complies with Conjecture~\ref{pol-sup} for $d=1,2$; but it is asymptotically superlinear in $n$ for any fixed $d\ge 3$.

However, the following fact is known: Theorem~\ref{alon-spl} is tight and the number~$d(n-1)+1$ cannot be reduced. As a consequence, we obtain the following Erd\H{o}s--Szekeres-type theorem about real-analytic functions:

\begin{theorem}
Suppose $L\subset C^\omega[0, 1]$ is a $(d+1)$-dimensional space of functions and $n$ is a prime power. Then there exist distinct  $f_1,\ldots, f_n\in L$ such that the upper envelope
\[
  g(x) = \max\{f_1(x), \ldots, f_n(x)\}
\]
has at least $d(n-1)$ non-analytic points (``switch points'').
\end{theorem}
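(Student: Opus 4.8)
The plan is to feed the (known) tightness of Theorem~\ref{alon-spl} into Theorem~\ref{gen-spl}, using the fact that on the segment $[0,1]$ the cells of a partition $P(F)$ are finite unions of subintervals whose common boundaries are exactly the non-analytic points of the upper envelope.

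First I would set up the auxiliary data. If $f,g\in L$ are distinct, then $f-g$ is a nonzero real-analytic function on $[0,1]$ and hence has only finitely many zeros; so $L$ is measure separating with respect to Lebesgue measure, and Theorem~\ref{gen-spl} applies with $X=[0,1]$ and this $L$. By the known tightness of the continuous splitting-necklace theorem, fix absolutely continuous finite measures $\mu_1,\dots,\mu_d$ on $[0,1]$ for which every partition of $[0,1]$ into finitely many segments that can be grouped into subfamilies $\mathcal F_1,\dots,\mathcal F_n$ with $\mu_i\big(\bigcup\mathcal F_j\big)=\frac1n\mu_i([0,1])$ for all $i,j$ must consist of at least $d(n-1)+1$ segments. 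Now apply Theorem~\ref{gen-spl} (here we use that $n$ is a prime power): we obtain distinct $f_1,\dots,f_n\in L$ whose partition $P(F)=\{V_1,\dots,V_n\}$ equipartitions each $\mu_i$.

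It then remains to read off the switch points of $g=\max_i f_i$. The set of points of $(0,1)$ at which $g$ fails to be real-analytic is finite, say $x_1<\dots<x_m$: on any open interval on which $g$ is analytic, the closed sets $\{g=f_i\}$ cover the interval, so one of them has nonempty interior, and analyticity forces $g$ to coincide with that single $f_i$ on the whole interval; and two consecutive maximal intervals of this kind must carry different labels, for otherwise $g$ would be analytic across the point separating them. Hence $x_1,\dots,x_m$ cut $[0,1]$ into $m+1$ segments, the $\ell$-th one contained in a single cell $V_{i(\ell)}$; grouping the segments by their label recovers the cells $V_1,\dots,V_n$ up to a finite (hence $\mu_i$-null) set of switch and tangency points, and every $V_i$ is nonempty because it has positive $\mu_i$-measure. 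This is exactly a fair splitting of $\mu_1,\dots,\mu_d$ into $n$ subfamilies using $m+1$ segments, so by the choice of the $\mu_i$ we get $m+1\ge d(n-1)+1$, that is $m\ge d(n-1)$ non-analytic (``switch'') points.

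The main difficulty is not a single deep estimate but the bookkeeping that identifies three presentations of the same object — the closed cells $V_i$ of $P(F)$, the combinatorial partition of $[0,1]$ into labeled segments, and the analytic singular locus of $g$ — and in particular checking that each cell is nonempty (this is where equipartition of a nonzero measure is used), that consecutive labeled segments are distinctly labeled, and that ``number of segments'' and ``number of switch points'' differ by exactly one. A secondary point requiring care is invoking the tightness of Theorem~\ref{alon-spl} in its continuous form with the precise count $d(n-1)+1$ of \emph{segments} (as opposed to cuts), which is the quantity matched above.
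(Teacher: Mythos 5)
Your proposal is correct and follows essentially the same route as the paper: a contrapositive argument combining Theorem~\ref{gen-spl} (applied to the measure-separating space $L$ of analytic functions on $[0,1]$) with the known tightness of the continuous splitting-necklace theorem, i.e., that $d(n-1)+1$ segments cannot be reduced. You simply spell out the bookkeeping (cells of $P(F)$ as finite unions of intervals cut at the non-analytic points of the envelope, nonemptiness of cells, consecutive labels distinct) that the paper's short proof leaves implicit.
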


\begin{proof}
If, for every subset $\{f_1,\ldots, f_n\}\subset L$, the number of changes of maximum in $g(x)$ from $f_i(x)$ to $f_j(x)$ (they are exactly non-analytic points) is less than $d(n-1)$, we would prove Theorem~\ref{alon-spl} using Theorem~\ref{gen-spl} with fewer than $d(n-1)+1$ segments. But this is known to be impossible.
\end{proof}

\section{Borsuk--Ulam-type theorem for configuration spaces}
\label{section:top}

Theorem~\ref{top} was contained in \cite{kar2009}, where its proof was sketched, based on previously known facts.  In fact, the most important cases of this theorem were previously known. For $n=p$ (i.e., a prime number) this theorem is valid even in $\mathbb Z/p$-equivariant cohomology (if we embed $\mathbb Z/p \subset \Sg_p$ in the natural way). This is a particular case of \cite[Lemma~5]{kar2009}, and seems to be have been known much earlier, see \cite[Theorem~3.4, Corollaries~3.5 and 3.6]{ct1991}, for example.
 
The case $d=2$ of Theorem~\ref{top} is contained in the paper \cite{vass1988} of V.~Vassiliev; the main idea of the proof goes back to D.~Fuchs \cite{fuks1970}, who solved the simplest case of $n=2^k$ and $d=2$. This case $n=2^k$ also follows from the direct computations in \cite{hung1990}, reproduced implicitly in \cite{mem2009}. 
While Vassiliev's proof uses Euler class considerations, it seems hard to find a satisfying reference for Poincar\'e duality with twisted coefficients in the noncompact case. Hence we provide a version of the proof from \cite{kar2009} that bypasses the Euler class and is based just on homology with the usual transversality argument. We provide a proof with compact support homology, then we include a variation that assumes only standard singular homology.  We still need the twisted coefficients (see \cite{bm1960} for compact support homology and \cite{ste43} for twisted coefficients).

In the recent paper \cite{bz2012}, Theorem~\ref{top} was established without using compact support homology nor twisted coefficients by constructing an elegant finite $\Sigma_n$-equivariant model for the configuration space $F_n(\Reals^d)$.  

\subsection{The cell decomposition of the configuration space}

We will describe a cell decomposition of the one-point compactification of $F_n(\Reals^d)$, denoted by $F_n'(\Reals^d):=F_n(\Reals^d) \cup \{\pt\}$. This decomposition is labeled by elements of $\Sigma_n$ and the labeling is $\Sigma_n$-equivariant so it induces a cell decomposition of $(F_n'(\Reals^d)/ \Sigma_n,\pt)$ when we ``forget'' the labels. We will use these decompositions to perform homological calculations. These decompositions appear in Fuchs's paper \cite{fuks1970} for $d=2$ and in \cite{vass1994} for the general case.
 
The reader is encouraged to examine figure~\ref{fig:tree} and skip the next few paragraphs.
\begin{figure}
  \centering

  \caption{Trees and cells: three trees and matching example point configurations from the corresponding cells}
  \label{fig:tree}
\end{figure}

The cell decomposition of $F_n'(\Reals^d)$ has one $0$-cell that corresponds to the point at infinity. The remaining cells are in bijection with a family of ordered, labeled trees of the following form: the height of the tree is $d$, in other words, the tree has $d+1$ levels including the root. Children of every node form a linearly ordered set. The tree has $n$ leaves, all of which occur at the bottom level. Only the leaves are labeled and they are labeled with numbers $1$ through~$n$ (we can think of these labels as an element of $\Sigma_n$).

The bijection between trees and cells of the decomposition is such that the dimension of the cell corresponding to the tree $T$ is $|T|-1$, where $|T|$ denotes the number of vertices in the tree $T$. Denote by $\deg(v)$ the number of children of the vertex $v$; for example, a leaf has $\deg(v)=0$. 
 The attaching maps will be defined implicitly. Instead we describe maps from the cells to configuration space. The following sets will be convenient to describe the maps: 
\[
  \Box^k:=\{(t_1, t_2, \ldots t_k) \in [-\infty,\infty]^k : t_1\leq t_2\ldots \leq t_k\}.
\] 
Note that $\Box^k$ is homeomorphic to a closed $k$-dimensional ball. 

For each tree $T$ we define a continuous map $C_T \colon \Pi_{v \in V} \Box^{\deg(v)} \to F_n'(\Reals^d)$, where $\Pi_{v \in V} \Box^{\deg(v)}$ denotes the Cartesian product of $\Box^k$. To describe this map we recall a sorting algorithm that assigns a tree to each configuration. 


The root (level~0) is associated with the entire configuration which is an $n$-tuple of points in $\Reals^d$. We sort the points by their first coordinate; nodes on level~$1$
correspond to groups of points sharing the same coordinate.  For example, if all points in the configuration have the same first coordinate, the root has one child; if all first coordinates are different, the root has $n$ children, sorted in the order of coordinate values.  The construction proceeds recursively: on level~$2$, we consider the set of points associated with a node at level~$1$, split them into groups according to the value of their second coordinate, sorted in increasing order, and associate a level~$2$ node with each group.  Repeat the process for each level, stopping at level~$d$, where we necessarily get a total of $n$ leaves, as all points differ in at least one coordinate. Finally, label the leaves of the tree by the labels of the points of the configuration. This labeling of the leaves corresponds to the lexicographical order of the points of the configuration.

In a word, given two points $x_{i_1}$ and $x_{i_2}$ in a configuration, the closest common ancestor of the leaves labeled $i_1$ and $i_2$ represents the largest indexed coordinate in which the two points coincide. With this sorting algorithm in mind, we return to describing the map $C_T$, which is, in some sense, the inverse of the algorithm. 

There is a natural correspondence between the coordinates of $\Pi_{v \in V(T)} \Box^{\deg(v)}$ and the vertices of $T$ minus the root. For each element $q \in \Pi_{v \in V(T)} \Box^{\deg(v)}$ we think of an assignment of a real number to each non-root vertex of the tree. For example, the first $\deg(root)$ coordinates of~$q$ are assigned to the children of the root respecting the order. Given this assignment, we describe the configuration $C_T(q)$. The $j$-th coordinate of the $i$-th point of the configuration is the coordinate of $q$ assigned to the unique vertex of level $j$ on the path from the root to the leaf ~$i$. This process describes an element of $\Reals^{nd}$ corresponding to an element of $\Pi_{v \in V(T)} \Box^{\deg(v)}$, for $F_n'(\Reals^d)$ to be the image, modify this assignment to map all the elements that had been mapped to $\Reals^{nd}\setminus F_n(\Reals^d)$ to the point at infinity. It is easy to check that we have defined a cell decomposition. Note that the boundary of the $(n+d-1)$-dimensional cell, the cell of lowest possible positive dimension, is the point at infinity. The elements at the boundary of any cell are those for which the inequality between two coordinates $t_{j_1}\leq t_{j_2}$ in one of the sets $\Box^{\deg(v)}$ becomes equality or those for which some coordinate $t_j$ is $\pm \infty$. The construction guarantees that boundary points of cells of dimension larger than $n+d-1$ are mapped to the lower dimensional skeleton. This concludes the description of the cell decomposition.

This decomposition is $\Sigma_n$-equivariant.  Moreover, the only fixed point is the point at infinity. Hence it induces a fixed-point-free $\Sigma_n$-equivariant cell decomposition of the pair $(F'_n(\Reals^d)/\Sigma_n,\pt)$, where $\pt$ is the point at infinity.


We denote by $\mathbf{C}_i(F'_n(\Reals^d)/\Sigma_n,\pt)$ the chain complex corresponding to this cell decomposition.  In Section~\ref{subsection:twisted} and subsequent sections the choice of coefficients will play an important role and we will denote  $\mathbf{C}_i(F_n'(\Reals^d)/\Sigma_n,\pt; R)$  the chain complex with coefficients in $R$, a $\Sigma_n$-module. Our statements about $\mathbf{C}_i(F_n'(\Reals^d)/\Sigma_n,\pt)$ hold for any coefficient system.

\subsection{A vector bundle reformulation}

Let $\rho_n \colon \Sigma_n \to \mathrm{O}(n)$ be the standard representation of $\Sigma_n$ by permutation matrices. The diagonal $\Delta:=(t,t, \ldots t)$ is invariant under the induced action of $\Sigma_n$ on $\Reals^n$ and this representation splits into two irreducible representations, the diagonal~$\Delta$, and its orthogonal complement $\{(y_1, y_2, \ldots y_n): y_i \in \Reals, \sum y_i=0\}$.  We consider the irreducible representation on the orthogonal complement and denote it by $\alpha_n \colon \Sigma_n \to \mathrm{O}(n-1)$.

Associated to this representation there is an $(n-1)$-dimensional vector bundle $\eta$ given by $(\alpha_n \times F_n(\Reals^d))/ \Sigma_n  \to F_n(\Reals^d)/ \Sigma_n$. This bundle can also be considered as a $\Sigma_n$-equivariant bundle over $F_n(\Reals^d)$. Any $\Sigma_n$-equivariant map  $f \colon F_n(\Reals^d)\to \alpha_n$ corresponds to a section of~$\eta$. Similarly, a $\Sigma_n$-equivariant map $f \colon F_n(\Reals^d)\to \alpha_n^{\oplus (d-1)}$ corresponds to a section of the $(d-1)$-fold Whitney sum of $\eta$ with itself. Theorem~\ref{top} is equivalent to the nonexistence of nowhere-zero sections of the vector bundle $\eta^{d-1}$. 

The strategy is to show that the zero sets of two generic sections are homologous and compute their homology class by exhibiting a particular generic section.
 
\subsection{The zero set of a generic section}

Forgetting the labels, there is only one tree with $n+d$ vertices so $\mathbf{C}_{n+d-1}(F_n'(\Reals^d)/\Sigma_n,\pt)$ has only one cell generator which we will denote by~$e$. Now we exhibit a section $s_g$ of $\eta$ that is transversal to the zero section and such that the pullback of the zero section by $s_g$ is the cell
$e$. 

Consider the map $g \colon F_n(\Reals^d)\to \Reals^{n(d-1)}$ that forgets the last coordinate of every point of the configuration. This map is clearly equivariant and so, by taking the Cartesian product and the quotient, it induces a section 
\[
  s_g \colon F_n(\Reals^d)/\Sigma_n \to (F_n(\Reals^d) \times
\alpha_n^{\oplus (d-1)})/ \Sigma_n.
\]
The open manifold $s_g^{-1}(0)$ corresponds to the configurations for which all the points share the first $d-1$ coordinates---this is precisely the generating cell in $\mathbf{C}_{n+d-1}(F_n'(\Reals^d)/ \Sigma_n,\pt)$. Observe that the section is transversal to the zero section, which can be clearly seen in the coordinates. Since the covering $F_n(\Reals^d) \to F_n(\Reals^d)/ \Sigma_n$ is regular, we can equivalently show that the image of $(id,g) \colon F_n(\Reals^d) \to F_n(\Reals^d) \times \Reals^{n(d-1)}$ is transversal to the image of $(id, 0)$.  Moreover, we can identify $F_n(\Reals^d)$ with its inclusion in $\Reals^{nd}$. Now we have two linear maps and transversality follows from counting dimensions and checking that the differentials are nondegenerate. 

We can conclude that $s_g$ is transversal to the zero section. There are no trees with fewer vertices, so the kernel of the boundary map $\partial_{n+d-1}$ is the whole $\mathbf{C}_{n+d-1}(F_n'(\Reals^d)/\Sigma_n, \pt)$; geometrically, the boundary of the cell $e$ is attached to the point at infinity. To compute the $(n+d-1)$-homology group we just need to understand what is the image of the boundary operator. The cells generating $\mathbf{C}_{n+d}(F_n'(\Reals^d)/\Sigma_n, \pt)$ correspond to trees with one vertex of each level from $0$ to $d-2$, two vertices of level $d-1$, and $n$ 
vertices of level $n$. Corresponding configurations are contained in a  $2$-plane parallel to the $x_{d-1}$- and $x_d$-axes, with the points of the configuration divided into two groups, each group lying on a line parallel to the $x_d$~axis, so $\mathbf{C}_{n+d}(F_n'(\Reals^d)/\Sigma_n,\pt)$ has $n-1$ generators, one for each nontrivial solution of $n_1+n_2=n$ in positive integers. The points lie on lines $\ell_1$ and $\ell_2$ parallel to the last coordinate axis;  we assume that $\ell_1$ is lexicographically before $\ell_2$, and we let $n_1$ be the number of points on $\ell_1$ and $n_2$ be the number of points on $\ell_2$. In this way, specifying the value of $n_1$ determines a generator of $\mathbf{C}_{n+d}(F_n'(\Reals^d)/\Sigma_n,\pt)$ which we denote by $e_{n_1}$.  This set of  $n-1$ generators forms is basis of $\mathbf{C}_{n+d}(F_n'(\Reals^d)/\Sigma_n,\pt)$.   Since $\mathop{\mathrm{Ker}} \partial_{n+d-1}=\mathbf{C}_{n+d-1}(F_n'(\Reals^d)/\Sigma_n,\pt)$, understanding the $(n+d-1)$-homology boils down to understanding $\mathop{\mathrm{Im}} \partial_{n+d}$. 
We did not mention the homology coefficients up to this point, but here the choice of coefficients becomes crucial.


\subsection{The case $p=2$}

For $p=2$, we take coefficients in the field $\mathbb F_2$. The boundary map annihilates the following multiples of $e$ for $1\le n_1\le n-1$ 
\[
\partial(e_{n_1})={n \choose n_1} e.
\]
This is easy to see as follows: Consider a configuration $\bar{x}$ in the cell $e$.  This is a configuration of unlabeled points on a line $\ell$.  Every configuration is a regular value of the attaching map. Fix a cell $e_{n_1}\in\mathbf{C}_{n+d}(F_n'(\Reals^d)/\Sigma_n, \pt; \mathbb F_2)$ and note that there are exactly ${n \choose n_1}$ configurations at the boundary of $e_{n_1}$ that map to $\bar{x}$, one for each splitting of $\bar{x}$ into two nonempty sets of $n_1$ on the left line and ${n-{n_1}}$ on the right line.

Now, if $n=2^k$, then  $2\mid{n \choose n_1}$, the boundary map $\partial$ is the zero map in $\mathbb F_2$ coefficients, and so the class $[e] \in H_{n+d-1}(F_n'(\Reals^d)/\Sigma_n,\pt; \mathbb F_2)=\mathbb F_2$, represents the nontrivial element. See also \cite{fuks1970} for a related discussion.

There is an alternative approach to the case $p=2$ that can be found in \cite{gr2003} and \cite{mem2009}. Instead of looking at the full group of symmetries, restrict the action to the automorphism group of the complete binary tree of height~$k$ with~$2^k$ leaves. 
This group sits naturally inside the symmetric group. It is not hard to prove the inductive formula $Aut(T_k)=\mathbb Z/2 \wr Aut(T_{k-1})$, 
where $\wr$ denotes the wreath product. The group $Aut(T_k)$ is the Sylow subgroup $\Sigma_{2^k}^{(2^k)} \subset \Sigma_{2^k}$.  After restricting the configuration space to the \emph{wreath product of spheres} $S^{d-1}\wr\dots\wr S^{d-1}$ (see \cite{hung1990} for the details), the Euler class of $\eta^{d-1}$ with mod $2$ coefficients coincides with the top Stiefel-Whitney class, which is amenable to induction on $k$, using the knowledge about the cohomology of the wreath product of spaces. Note that the base of the induction corresponds to the Borsuk--Ulam theorem (see  \cite[page~10]{mem2009}). Generally, the map~$s_g$ has a unique transversal to zero in $S^{d-1}\wr\dots\wr S^{d-1}/\Sigma_{2^k}^{(2^k)}$, which establishes Theorem~\ref{top} immediately. 

\subsection{Homology with twisted coefficients}
\label{subsection:twisted}


We now recall what homology with twisted coefficients is. The cellular decomposition of $F'_n(\Reals^d)$ is invariant under the action of $\Sigma_n$. Moreover, all the cells except the point at infinity are permuted by $\Sigma_n$ freely. Recall that we denote the corresponding relative chain groups by $\mathbf{C}_i(F_n'(\Reals^d),\pt)$. When we want to introduce the twisted coefficients, we start with a $\Sigma_n$-module $R$. Then the corresponding equivariant twisted chains are those chains from $\mathbf{C}_i(F_n'(\Reals^d),\pt)\otimes R$ that are invariant with respect to the diagonal action of $\Sigma_n$ on this abelian group, that is
\[
  \mathbf{C}_i(F_n'(\Reals^d)/\Sigma_n,\pt; R) = \left( \mathbf{C}_i(F_n'(\Reals^d),\pt)\otimes R \right)^{\Sigma_n}.
\]
The differential in this complex is given by $\partial\otimes \id_R$.

We are going to be interested in the particular case $R=\widehat\ZZ$, where $\widehat\ZZ$ is the free abelian group $\ZZ$, but with $\Sigma_n$ acting on it as $x\mapsto \sgn\sigma\cdot x$, where $\sgn x$ is the sign of a permutation~$\sigma$. In this case the group of chains $\mathbf{C}_i(F_n'(\Reals^d)/\Sigma_n,\pt; \widehat\ZZ)$ may be regarded as the subgroup of those chains in $\mathbf{C}_i(F_n'(\Reals^d),\pt)$ that get multiplied by $\sgn \sigma$ under the action of $\sigma\in\Sigma_n$.

\subsection{Homology in dimension $n+d-1$ with twisted coefficients}


We have to check that the cycle $[e]\in \mathbf{C}_{n+d-1}(F_n'(\Reals^d)/\Sigma_n,\pt; \widehat\ZZ)$, corresponding to the unique orbit of $(n+d-1)$-dimensional cells is not annihilated by the boundary map with twisted coefficients. Here $[e]$ corresponds to the unique orbit of a $(n+d-1)$-dimensional cell $e'$; more precisely,
\[
  [e] = \sum_{\sigma\in\Sigma_n} \sgn \sigma\cdot \sigma e', 
\]
with orientation of $\sigma e'$ chosen so that the map $\sigma \colon e' \to \sigma e'$ preserves the orientation. 

The cells of dimension $n+d$ can be described as orbits of the following cells: let $e'_{n_1}$ be the cell of configurations with first $n_1$ points on $\ell_1$ and the last $n-n_1$ points on $\ell_2$, two vertical lines on a $2$-plane perpendicular to the first $d-2$ vectors of a standard basis. We also assume that the point order on $\ell_1$ and $\ell_2$ is consistent with the indexing. 

The cell $e'_{n_1}$ is explicitly given by relations (here the subscript is the index of a point in the list, and the superscript is its coordinate):
\begin{gather*}
  x_1^j = x_2^j = \dots = x_n^j,\\
\intertext{for $j = 1,\ldots, d-2$,}
  y_1 = x_1^{d-1} = x_2^{d-1} = \dots = x_{n_1}^{d-1} < x_{n_1+1}^{d-1} = \dots = x_n^{d-1} = y_2,
\\
x_1^d < \dots < x_{n_1}^d, \quad\text{and}\quad x_{n_1+1}^d < \dots < x_n^d.
\end{gather*}

Now it remains to calculate the coefficient of the restriction $\partial\colon e' \to e'_{n_1}$. In \cite{vass1988} this coefficient was shown to be $\binom{n}{n_1}$ up to sign for $d=2$; in fact, the computation yields the same result for any $d \geq  2$, because the coordinates $j=1,\ldots, d-2$ remain the same for all points in $e'_{n_1}$ and $e'_n$.  We now show a detailed calculation.

Recall that for any $\sigma\in \Sigma_n$ we orient $\sigma e'$ so that the map $\sigma \colon e'\to \sigma e'$ preserves the orientation. Clearly, this orientation coincides with the orientation given by the form $dx_*^1\wedge \dots \wedge dx_*^{d-1}\wedge dx_1^d\wedge \dots \wedge dx_n^d$ (here $x_*^j$ denotes the common value of $x_i^j$ for $j=1,\ldots, d-2$) if and only if $\sigma$ is an even permutation. Therefore 
the orientation given by the above differential form is actually consistent with $\widehat\ZZ$ coefficients.

We orient $e'_{n_1}$ by the form $dx_*^1\wedge \dots \wedge dx_*^{d-2}\wedge dy_1\wedge dy_2 \wedge dx_1^d\wedge \dots \wedge dx_n^d$.  The boundary~$\partial e'_{n_1}$ corresponds to approaching the equality 
$x_*^{d-1} = y_1 = y_2$ from the side $y_1 < y_2$ and it is therefore oriented in $\partial e'_{n_1}$ by the form $dx_*^1\wedge \dots \wedge dx_*^{d-2}\wedge dx_*^{d-1} \wedge dx_1^d\wedge \dots \wedge dx_n^d$. Thus we obtain:
\[
  \partial e'_{n_1} = \sum_{\sigma\in M_{n_1, n-n_1}} \sgn\sigma\cdot \sigma e',
\]
where the subset $M_{n_1, n-n_1}\subset \Sigma_n$ consists of permutations $\sigma$ such that
\[
  \sigma(1) < \dots < \sigma(n_1)\quad\text{and}\quad \sigma(n_1+1) < \dots < \sigma(n).
\]
Obviously, $|M_{n, n-n_1}|=\binom{n}{n_1}$. For the homology with twisted coefficients we need to calculate:
\begin{equation}
\label{diff-coeff}
\partial \sum_{\tau\in \Sigma_n} \sgn\tau \cdot \tau e'_{n_1} = \sum_{\tau\in \Sigma_n, \sigma\in M_{n, n-n_1}}  \sgn\tau \sgn\sigma \cdot \tau \sigma e' = \binom{n}{n_1} \sum_{\rho =\tau\sigma\in\Sigma_n} \sgn\rho \cdot \rho e'.
\end{equation}
The last equality holds since every $\rho\in\Sigma_n$ can be represented as $\tau \sigma$ for $\tau\in \Sigma_n, \sigma\in M_{n, n-n_1}$ in precisely $\binom{n}{n_1}$ ways.

Since $n$ is a power of a prime $p$, we have the following congruence of polynomials in $t$:
\[
\sum_{n_i=0}^{n} {n \choose n_i} t^{n_i}= (1+t)^n \equiv 1 + t^n \pmod p,
\]
which implies that all the binomial coefficients $\binom{n}{n_1}$ for $0<n_1<n$ are divisible by~$p$.  Hence all the coefficients of the boundary operator, $\sum_{\rho\in \Sigma_n} \sgn\rho\cdot  \rho e'$ are divisible by $p$ and $[e]$ does represent a nonzero homology mod $p$. In fact, we have just proven that $H_{n+d-1}(F'_n(\Reals^d)/\Sigma_n, \pt; \widehat\ZZ) = \mathbb F_p$ with generator $[e]$.

\begin{remark}
With (untwisted) $\mathbb Z$ coefficients the leftmost identity of the formula (\ref{diff-coeff}) still holds but the corresponding cycle $\sum_{\tau\in \Sigma} \tau e'_{n_1}$ may not be divisible by $\binom{n}{n_1}$.
\end{remark}

\subsection{Homologous sections of $\eta^{d-1}$} 

Since the set of generic sections is dense it suffices to prove that a generic section of $\eta^{d-1}$ attains zero somewhere.  Informally, a section without zeros can be made generic without acquiring any zeros. Let $\xi \colon \alpha_n^{\oplus (d-1)}\times F_n(\Reals^d) \to F_n(\Reals^d)$ be a $\Sigma_n$-equivariant vector bundle over the open manifold $F_n(\Reals^d)$. The action of $\Sigma_n$ on $F_n(\Reals^d)$ changes the orientation (by the sign action of $\Sigma_n$) if and only if $d$ is odd; it changes the orientation of $\alpha_n^{\oplus (d-1)}$ if and only if $d$ is even. Hence, it always changes the orientation of $\alpha_n^{\oplus (d-1)}\times F_n(\Reals^d)$.

Consider the linear interpolation between any two generic $\Sigma_n$-equivariant sections $s$ and $t$ of $\xi$, i.e., the section $\lambda s+(1-\lambda)t$. After an appropriate perturbation for $\lambda\in (0, 1)$ this section over $[0,1]\times F_n(\Reals^d)$ becomes transversal to zero and its zero set induces a bordism between the zero sets $Z_s:=s^{-1}(0)$ and $Z_t=t^{-1}(0)$. The zero sets are (not necessarily compact) manifolds. The bordism is not necessarily compact so some care needs to be taken. Before we deal with the compactness issue observe that the bordism is $\Sigma_n$-equivariant, and let us show that the orientations are in accordance with the sign action of $\Sigma_n$.

Let $Z$ be the zero set of a generic section and $\nu(Z)$ its normal bundle in $\alpha_n^{\oplus (d-1)}\times F_n(\Reals^d)$. By transversalitly of $Z$ with the zero section, the bundle $\nu(Z)$ is the restriction of the bundle $\xi\oplus\xi$ to $Z$. The (even-dimensional) bundle $\xi\oplus\xi$ is oriented and its orientation is invariant under the action of $\Sigma_n$, hence the orientation of $Z$ and the action of $\Sigma_n$ on its orientation coincides with that of $\alpha_n^{\oplus (d-1)}\times F_n(\Reals^d)$. 
In a word, the orientation character of $Z$ is actually what we called $\widehat\ZZ$. Hence, we treat zero sets of sections as $\Sigma_n$-equivariant cycles with the sign action of $\Sigma_n$ on their orientation.
We have already exhibited a particular section $s$, for which 
\[
  [Z_s]=[e]\in H_{n+d-1}(F'_n(\Reals^d)/\Sigma_n,\pt;\widehat{\ZZ}).
\] 
For any other $\Sigma_n$-equivariant section $t$ the bordism between $Z_s$ and $Z_t$ is $\Sigma_n$-invariant and 
the action of $\sigma \in \Sigma_n$ on this chain changes its
orientation by $\sgn \sigma$.

Now we deal with the compactness issue.  We first do it via compact support homology. 

The bordism between $Z_s$ and $Z_t$ is not necessarily compact but it is locally compact in $F_n(\Reals^d)$. Hence the zero sections represent well defined classes in the compact support homology and their bordism implies that they are homologous. Since we have already established that $[Z_s] = [e] \not=0$ in
$H_{n+d-1}(F'_n(\Reals^d)/\Sigma_n, \pt; \widehat\ZZ)$, this means that
$[Z_t]$ also represents a nontrivial class and therefore $Z_t$ is not empty as a set.


Now we provide an alternative to the last part of the proof without reference to compact support homology with standard notions of singular homology. We will cut out a submanifold with boundary $(M, \partial M)$ from $F_n(\Reals^d)$ so that the bordism between $Z_s$ and $Z_t$ restricts to a compact bordism between the relative cycles $[Z_s\cap M]$ and $[Z_t\cap M]$, and the pullback of the class $[e]$, represented by $[Z_s\cap M]$, does not vanish in $H_{n+d-1}(M/\Sigma_n,\partial M/\Sigma_n; \widehat{\ZZ})$.

Observe that the pair $(F'_n(\Reals^d), \pt)$ was actually realized as follows: we considered $F_n(\Reals^d)$ as a subset of $\mathbb R^{nd}$, such that its complement $X=\mathbb R^{nd}\setminus F_n(\Reals^d)$ is a union of several linear subspaces of $\mathbb R^{nd}$.
If we compactify $\mathbb R^{nd}$ to the sphere $S^{nd}$ with a point at infinity, then $X$ gets compactified to $X'\subset S^{nd}$. By excision, we see that the homology of the pair $(F_n'(\Reals^d)/\Sigma_n, pt)$ is isomorphic to the homology of the pair $(S^{nd}/\Sigma_n, X'/\Sigma_n)$. 
Now we observe that $X'$ is a union of a finite number of equatorial subspheres of codimension $d$ in $S^{nd}$. Hence $X'$ is a closed set and it has a neighborhood $N\supset X'$ that can be deformed equivariantly onto $X'$.

Now we choose a $nd$-dimensional, $\Sigma_n$-invariant smooth manifold with boundary $M\subset F_n(\Reals^d)$ so that $\partial M$ is contained in~$N$. 
Since the deformation retraction is equivariant, the homology map $H_*(X'/\Sigma_n; \widehat \ZZ) \to H_*(N/\Sigma_n; \widehat \ZZ)$ is an isomorphism. Put $\bar M = S^{nd}\setminus \inte M\subseteq N$ and consider the natural homology maps
\[
\begin{CD}
H_*(S^{nd}/\Sigma_n, X'/\Sigma_n; \widehat \ZZ) @>{f}>> H_*(S^{nd}/\Sigma_n, \bar M/\Sigma_n; \widehat \ZZ) @>{g}>> H_*(S^{nd}/\Sigma_n, N/\Sigma_n; \widehat \ZZ).
\end{CD}
\]
Their composition $g\circ f$ is an isomorphism and therefore $f$ is an injection. Again by excision, we also have 
$$
H_*(S^{nd}/\Sigma_n, \bar M/\Sigma_n; \widehat \ZZ) = H_*(M/\Sigma_n, \partial M/\Sigma_n; \widehat \ZZ).
$$ 
Therefore the manifold $Z_s \cap M$ with boundary $Z_s\cap \partial M$ represents a nonzero class in $H_{n+d-1}(M/\Sigma_n, \partial M/\Sigma_n; \widehat\ZZ)$. The equivariant bordism between $Z_s$ and $Z_t$ restricts to a compact equivariant bordism between relative homology cycles $[Z_s\cap M]$ and $[Z_t\cap M]$ in $(M/\Sigma_n, \partial M/\Sigma_n)$, with coefficients $\widehat \ZZ$, hence $[Z_t\cap M]$ also represents a nontrival relative homology in $(M/\Sigma_n, \partial M/\Sigma_n)$ and the set $Z_t\cap M$ cannot be empty. This establishes Theorem~\ref{top}. 

\begin{remark}
  The last part of the argument with the pair $(M,\partial M)$ is
  valid for any coefficient system as it only relies on excision. This
  argument can be used to define the Euler class of $\xi$ with
  coefficients in $\widehat{\ZZ}$ without reference to compact support
  homology by taking Poincar\'e duality with the pair $(M,\partial
  M)$.
\end{remark} 

\section*{Remarks and questions}

Recall the waist of the sphere inequality of \cite{gr2003, mem2009}:

\begin{theorem} 
For any continuous map $f\colon \mathbb S^d \to \Reals^k$, there exists $z \in \Reals^k$ such that,  
\[
  \vol(f^{-1}(z)+t) \geq \vol(\mathbb S^{d-k}+t),
\] 
for all $t$, where $\mathbb S^{d-k}$ is an equatorial sphere inside $\mathbb S^d$.
\end{theorem}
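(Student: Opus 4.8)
The plan is to follow the convex-partition (``pancake'') strategy of Gromov and Memarian, using the equipartition result above---specifically Theorem~\ref{gro} in its classical case $p=2$, the Gromov--Borsuk--Ulam theorem---as the driving tool.

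First I would normalize the problem. A small $C^0$-perturbation makes $f$ smooth with $0$ a regular value, and both $\vol(f^{-1}(z)+t)$ and the conclusion behave well under $C^0$-limits of $f$ and Hausdorff limits of the fibres, so it suffices to treat this generic case; I may also assume $1\le k\le d-1$, the case $k=d$ being a direct consequence of the Borsuk--Ulam theorem applied to $x\mapsto f(x)-f(-x)$ (the resulting fibre contains an antipodal pair) and the case $k=0$ being trivial. Fix a radius $t>0$; the goal is to produce $z=z(t)$ with $\vol(f^{-1}(z)+t)\ge\vol(\mathbb S^{d-k}+t)$, and a standard compactness argument over a countable dense set of radii will then extract a single $z$ that works for all $t$ simultaneously.

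Now the equipartition step. Let $\mu$ be the $\mathrm O(d+1)$-invariant probability measure on $\mathbb S^d$, let $c\colon\mathcal K^d(1)\to\mathbb S^d$ be a continuous centermap (for instance the spherical Steiner point, continuous in the Hausdorff metric), and let $g\colon\mathbb S^d\to\Reals^{d-1}$ be $f$ in its first $k$ coordinates and $0$ in the remaining $d-1-k$. For each $n=2^r$, Theorem~\ref{gro} gives a partition of $\mathbb S^d$ into convex bodies $K_1^{(n)},\dots,K_n^{(n)}$ with $\mu(K_i^{(n)})=1/n$ and $f\bigl(c(K_i^{(n)})\bigr)=z_n$ independent of $i$; in particular the marked points $p_i^{(n)}:=c(K_i^{(n)})$ all lie on the single fibre $f^{-1}(z_n)$. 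Passing to a subsequence we may assume $z_n\to z$, and then every accumulation point of the set $\{p_i^{(n)}\}$ lies on $f^{-1}(z)$, which already exhibits the limiting fibre as ``spread out'' over the whole sphere.

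The geometric heart of the argument is to show that, as $n\to\infty$, these equal-volume convex cells are forced to become thin ``pancakes'' whose long directions are transversal to the fibres of $f$, and to convert this into the volume bound. Roughly, each cell $K_i^{(n)}$ contributes the slice $f^{-1}(z_n)\cap K_i^{(n)}$ of the fibre; using convexity together with the spherical isoperimetric inequality one bounds the $t$-thickening of this slice from below in terms of its $(d-k)$-dimensional size, and---because the cells tile $\mathbb S^d$ with equal volumes while each carries a marked point on the near-level set $f^{-1}(z_n)$---the total $(d-k)$-size of the slices, together with the way the thickened slices fit together, is at least that of an equatorial $\mathbb S^{d-k}$ in the limit. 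Passing to $n\to\infty$ then yields $\vol(f^{-1}(z)+t)\ge\vol(\mathbb S^{d-k}+t)$, and the compactness argument over $t$ finishes the proof. The main obstacle is exactly this comparison: the topological equipartition input (Theorem~\ref{gro}) is cheap, whereas controlling the shapes of the cells and accounting for the overlaps of the thickened slices---in essence Memarian's pancake lemma---is the technical core, and it likely requires either an induction on $k$ or a dedicated isoperimetric computation for convex subsets of $\mathbb S^d$.
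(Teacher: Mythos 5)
You should first be aware that the paper does not prove this statement at all: it is quoted in the concluding remarks as the known waist-of-the-sphere theorem of Gromov and Memarian \cite{gr2003,mem2009}, and the paper's only claim is that its Theorem~\ref{gro} supplies the topological (equipartition) ingredient needed for the ``pancake decomposition'' step of that proof. Your outline reproduces exactly this known strategy: pad $f$ to a map to $\Reals^{d-1}$, apply Theorem~\ref{gro} with $n=2^r$ to get equal-volume convex cells whose centers all lie on one fibre $f^{-1}(z_n)$, and pass to the limit $n\to\infty$. Up to that point you are aligned with the intended (cited) argument.

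However, as a proof your proposal has a genuine gap, and you acknowledge it yourself: the entire analytic core is missing. The step ``using convexity together with the spherical isoperimetric inequality one bounds the $t$-thickening of each slice \dots and the total $(d-k)$-size of the slices \dots is at least that of an equatorial $\mathbb S^{d-k}$ in the limit'' is not a routine verification; it is the bulk of Gromov's paper and of Memarian's. One must (i) extract a convergent subsequence of the partitions and show that the limiting cells are genuine ``pancakes'' (convex sets degenerating onto low-dimensional convex sets), which does not follow merely from the cells having volume $1/n$ --- equal volume alone does not force any particular shape; (ii) prove a per-cell comparison of the form $\vol\bigl((f^{-1}(z)\cap K_i)+t\bigr)\cap K_i \gtrsim \frac{\vol(\mathbb S^{d-k}+t)}{\vol(\mathbb S^d)}\,\vol(K_i)$ for such pancakes (your formulation in terms of ``total $(d-k)$-size of the slices'' is not the right quantity and would not sum correctly, since thickened slices of different cells overlap only along the walls --- the localization works with relative volumes inside each cell, not with lower-dimensional measures); and (iii) justify that an arbitrary continuous centermap suffices --- in fact the choice of center is not innocent, because the per-pancake inequality needs the marked point of the fibre to sit well inside the cell, which is a property of specific centers (e.g.\ the center of mass) and must be proved. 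Since none of these is supplied, what you have is a correct identification of the reduction that this paper provides (Theorem~\ref{gro}) together with a pointer to where the real work lies, not a proof of the waist inequality.
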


Notice that $t$ is not assumed to be small in the statement of the theorem.

The proof of this theorem relies on ideas from the celebrated \emph{localization} technique. The main new tool to perform a localization-type argument (called a \emph{pancake decomposition} in \cite{mem2009}) when the image is not one-dimensional is Theorem~\ref{gro}.

It will be interesting to use the methods of Section~\ref{section:transport} and the analytical content of Gromov's proof to obtain a \emph{comparison} waist inequality for manifolds of positive curvature with the normalized Riemannian volume. What if we drop the curvature constraint? Are there other variational problems on the space of cycles or related geometric inequalities that can be approached through variational or topological problems on configuration space or related spaces through optimal transport?  

Can we say something about the combinatorics of power diagrams for some natural continuous functionals? Can we say anything about the measure $\nu=\frac{1}{n}\sum \delta_{x_i}$  as $n$ approaches infinity, or about its weak limit? In the case of a cube, for any $n$ there are many ``spicy chicken'' partitions, is this the generic case or an exceptional one?


Meanwhile, the Nandakumar--Ramana Rao conjecture remains open for six pieces in the plane.

\subsection*{Acknowledgments}
We thank Arseniy~Akopyan, Imre~B\'ar\'any, Pavle~Blagojevi\'c, Sylvain~Cappell, Fred~Cohen, Daniel~Klain, Erwin~Lutwak, Yashar~Memarian, Ed~Miller, Gabriel~Nivasch, Steven~Simon, and Alexey~Volovikov for
discussions, useful remarks, and references.  We also thank an
anonymous referee for encouraging us to merge our papers and for
his/her enthusiasm towards the \emph{chicken nuggets} description of
Corollary~\ref{spicy-chicken}.

\section{Appendix of 2016}

\subsection{Sparrows at cannons.}

Recently we became aware of an unrefereed popular exposition \cite{z2015} in which G\"unter Ziegler discusses the results first discovered in this paper and inaccurately criticizes one of our proofs. Referring to the topological content of the published version \cite{ahk2013} of the current paper, in \cite{z2015}, Ziegler asserts: ``\textsl{There seems to be no way to make their approach, which would need a relationship between the Euler class and homology classes induced by generic cross-sections, rigorous and complete.}'' This is incorrect even at a superficial level. The proof written above avoids invoking the Euler class, as we remarked in the introduction and body of this paper. In this appendix we address these criticisms and make some clarifications. We remark that the text before the Appendix was not changed from the published version of this article. 

\medskip

Before going into the details of  this appendix we make a tangential remark. The new contribution of the paper  \textsl{Convex equipartitions and equivariant obstruction theory} \cite{bz2012} coauthored by G\"unter Ziegler and Pavle Blagojevi\'c is topological. They show that the conditions of Theorem~\ref{top} are necessary. In the case of the plane, Arone \cite{arone} had previously shown that, if $n\neq p^k,2p^k$, for a prime $p$, then there exist $\Sigma_n$-equivariant maps $F_n(\mathbb R^2) \to \mathbb S^{n-2}$.  The result in \cite{bz2012} takes care of the cases $n=2 p^k$, $d=2$ and the cases $n\neq p^k$ for general dimension. The \textsl{convex equipartition} part of their paper is taken verbatim from this paper in its published version \cite{ahk2013}. The Nandakumar--Ramana Rao conjecture remains open for any number that is not a prime power and the results of \cite{bz2012} imply that the technique of the current paper does not extend to the non-prime power case.


\subsection{Organization of this appendix}

In Section \ref{FV} of this appendix we review the critical comments from \cite{z2015} and from (the last version\footnote{These claims were not part of the first arXiv version of \cite{bz2012}.} of) \cite{bz2012}. We explain why they do not apply to this paper. Notwithstanding the fact that the authors of~\cite{bz2012} did not provide any argument or reference to back their claims, their comments have prompted us to scrutinize our writing and consult algebraic topologists. Our colleagues agree with our arguments and have suggested that we
clarify the term ``compactly supported homology'' which might be misleading. In order to make more explicit that we do not rely on
homotopy invariance of this homology theory, we also explain in more detail the two different compactifications and two forms of excision
used in the proof of Theorem~\ref{top}.

In Section~\ref{argument} we clarify issues about the type of homology and make some well known remarks on transversality. To provide more details we outline the proof of Theorem~\ref{top} in Section~\ref{outline}. The part of our proof that seems closer to the criticism relies on homological considerations that, in turn, rely on some standard geometric topology. In Section~\ref{homology} we provide references for the homological part and a detailed construction of the manifold with boundary, denoted by $M$, that we use as a compact version of the configuration space.
We give two constructions  using the PL topology and using the smooth topology.

We use $\mathcal{S}$ to refer to a piecewise-linear sphere, $S^N$ to a topological sphere, and $\Sphere^N$ to a sphere as a metric space.  We follow the same typographical convention for other objects.



\subsection{The Fuks--Vasiliev proof}\label{FV}

The proof of Theorem~\ref{top} that relies on Euler class considerations is not due to us. The case $n=2^k$, $d=2$ of Theorem~\ref{top} follows the study of the braid group in the paper \cite{fuks1970} of D.~B.~Fuks. The cases $n=p^k$, $d=2$ of Theorem~\ref{top} appeared on page~186 of V.~Vasiliev's paper \cite{vass1988}, where the theorem is written in terms of the Euler class and is applied to the study of topological complexity (in the sense of Smale) of an algorithm that computes all the roots of a polynomial. In more standard terms, it was used to estimate the Lusternik--Schnirelmann category of $F_n(\Reals^2)/\Sigma_n$ from below. The observation that these proofs generalize to any dimension for $n=p^k$ can be found in print as Lemma~5 in \cite{kar2009}, where the case $n=p$ is obtained by a different method (Lemma~4 of the same paper). In the preprints \cite{huar2010} and \cite{kar2010} we wrote Vasiliev's proof in more detail, explicitly stating that no new content was being added. The criticism of \cite{bz2012} was aimed at this proof, as can be seen from the acceptance and publication dates. 

The skepticism expressed in \cite{z2015} towards the proofs of Fuks and Vasiliev (which are attributed to us) continues in a footnote:``\textsl{Karasev et al. also set out to show the non-existence of the equivariant map $F(\Reals^d, n) \to S^{n-2}$ in the case when $n$ is a prime power. For this, they intend to show that the Euler class with twisted coefficients of a natural vector bundle over the open manifold $F(\Reals^d, n)/\Sigma_n$ is non-zero. However, the relationship between a generic cross-section and the Euler class of the bundle via Poincar\'e duality breaks down over open manifolds. For a more detailed discussion, see [7, p. 51].}'' It refers to page~51 of the paper \cite{bz2012} for a ``\textsl{more detailed}'' discussion. In the paper \cite{bz2012} the same claim is repeated, and the reader is referred to page~48 of the book \cite{p2007} to justify the fact that (co)homology with compact support is not a homotopy invariant, a fact that can be explained in one sentence: Two Euclidean spaces of different dimension are homotopically equivalent, yet $H_i^{\mathrm{cl}}(\Reals^d)=\tilde H_i(S^d)=\ZZ$ if $i=d$ and $0$ otherwise, so $H_i^{\mathrm{cl}}$ is not a homotopy invariant. This fact does not contradict anything stated in the published version of this paper \cite{ahk2013}.  Blagojevi\'c and Ziegler conclude from this elementary fact without further argument or reference to other works that (see p.~51 \cite{bz2012}) ``\textsl{the fundamental classes of singular sets of two different generic cross sections do not need to coincide in $H^{\mathrm{cl}}_*(M)$.}''  


The fact that compactly supported (co)homology is not a homotopy invariant does not provide evidence supporting the claim that the proofs of Fuks and Vasiliev, nor the one given here, are incorrect. In fact, what we showed was that there is a manifold with boundary $M$ contained in $F(\Reals^d, n)$ for which the fundamental classes of singular sets of two different cross sections coincide in $H_*^{\Sigma_n}(M,\partial M; \cdot)$. 



In the last remark of the published version of this paper \cite{ahk2013} we suggested that the strategy used here could be seen as a justification of the Fuks--Vasiliev argument for the nontrivially of the Euler class. This is a slightly stronger statement than Theorem~\ref{top} and some authors use this approach as the definition of the Euler class. In retrospect we prefer to see this remark as non-rigorous: Given a vector bundle and its orientation sheaf of coefficients, the corresponding Euler class is defined by pulling back the corresponding Euler class from a principal bundle of the non-oriented Grassmannian with coefficients in its orientation sheaf.

We are not aware of any previous criticism of the validity of Vasiliev's result. We could not find a satisfying reference to identify the Euler class as a Poincar\'e dual in the case of $n$ power of an odd prime; yet when we asked algebraic topologists, they seem confident that the standard argument extends to this generality. It is perhaps worth mentioning that the technique used in this paper is reminiscent of related ideas of Vasiliev (see for instance his plenary 1994 ICM talk and the homonymous book \cite{vass1994}, in particular, Chapter 1 and Appendices 4 and 5). 

Observe that in the case $n=2^k$, the second argument explained in Section 5.4 of this paper does yield the claimed non-triviality of the Euler class with a textbook argument. Indeed, the classifying map of the bundle restricted to the wreath product of spheres mentioned in Section 5.4 factors through the inclusion map into configuration space, i.e.  $S^{d-1}\wr\dots\wr S^{d-1}/\Sigma_{n}^{(2)} \to F(\Reals^d, n)/\Sigma_{n}^{(2)} \to G_{(d-1)n}$, where $G_{(d-1)n}$ is the non-oriented infinite Grassmannian. Since $S^{d-1}\wr\dots\wr S^{d-1}/\Sigma_{n}^{(2)}$ is a compact smooth manifold, the argument sketched there yields that the Euler class modulo two\footnote{In this case the Euler class coincides with the top Stiefel--Whitney class.} 
$e(\xi_{|{S^{d-1}\wr\dots\wr S^{d-1}/\Sigma_{n}^{(2)}}})$ is not zero via Poincar\'e duality on $S^{d-1}\wr\dots\wr S^{d-1}/\Sigma_{n}^{(2)}$. Since $i^*(e(\xi))=e(\xi_{|_{S^{d-1}\wr\dots\wr S^{d-1}/\Sigma_{n}^{(2)}}})$, where $e$ denotes the Euler class and $i$ is the inclusion map, we might conclude that $e(\xi)\neq 0$ in the $\Sigma_{n}^{(2)}$-equivariant cohomology, and therefore in $\Sigma_{n}$-equivariant cohomology. 

A similar argument might yield the nontriviality of the Euler class substituting the product of spheres with the manifold with boundary that appears in our proof of Theorem ~\ref{top}. Taking the cap product of the test cycle $Z_s$ with $(M,\partial M)$ yields a cohomology class in $H^{(d-1)(n-1)}_{\Sigma_n}(M; \cdot)$. Our proof implies that this class coincides with the cap product of any other generic zero set $Z_t$ with $(M,\partial M)$. Moreover, in our proof we can take $M$ arbitrarily large in the sense, for any compact set $K \subset F(\Reals^d, n)$, we can construct $M$ containing~$K$.   For the spicy chicken theorem and similar potential applications in discrete and convex geometry, Theorem~\ref{top} suffices. We leave the Euler class discussion to algebraic topologists. 

\subsection{Clarifications on homology}\label{argument}


Our colleagues point out that the term ``compact support homology'' in our proof might be misleading. Ordinary singular chains are finite and therefore have compact support by definition. We used this term in analogy with the textbook notion of ``compact support cohomology.'' It is better to call the homology we use ``locally finite homology.''  This is defined through possibly infinite singular chains such that every compact set intersects only a finite number of singular simplicies from the chain. 
In our case, this type of homology will be equal to the relative (equivariant) homology of a compactification, provided the complement of our open manifold is a deformation retract of its neighborhood. This was mentioned in the published version of this paper~\cite{ahk2013}. For non-pathological spaces this homology coincides with Borel--Moore homology as defined through sheaves.

The second clarification we should make is that in our text we take the quotient by the symmetric group in some parts of the argument. If the action is free, the homology of the quotient is essentially the same as the equivariant homology of the space. In our case the action is free but the complement in the compactification has nontrivial stabilizers, so we prefer to work with equivariant homology and refer the reader to the book \cite{bred1967} for the basic facts that we use. 

Notice that the issues of transversality are not delicate. Namely, if a finite group $G$ acts freely on a manifold $X$, any equivariant section $f\colon X\to_G V$ (here $V$ is a representation of $G$) can be made transversal preserving equivariance. Indeed, the existence of a transversal equivariant section $\hat{f}$ that is arbitrarily close to $f$ reduces to considering the quotient bundle $(X\times V)/G$ and its sections over the manifold $X/G$, where the total space of the bundle is $(X\times V)/G$. So the question reduces to standard transversality theory, without the need of delicate issues of equivariant transversality in the presence of non-free group actions. 

Given a continuous equivariant section of the vector bundle without zeros, we approximate it in strong topology by a smooth equivariant section, so that the latter section has no zeros, and thus is transversal to zero. Therefore we can speak about the smooth and transversal-to-zero equivariant sections of the bundle in question. We stress once again that, for a smooth and transversal-to-zero section $t$, the zero set $Z_t = t^{-1}(0)$ is a closed submanifold (without boundary) of the configuration space that is not necessarily compact and may be open.

\subsection{Outline of the proof of Theorem~\ref{top}}\label{outline}

Let us outline the proof of the main topological lemma of the paper. In particular, we do it to emphasize that none of the following steps assume nor use homotopy invariance of compactly (finitely) supported homology. 

\begin{enumerate}


\item The aim of the proof.
 
We want to show that the zero set $Z_t=t^{-1}(0)$ of any equivariant section $t \colon F_n(\Reals^d) \to \xi$ is not empty. The transversality argument allows us (as explained above) to consider smooth and generic section in place of $t$. 

\item Test section.

The test section analyzed in Section~5.6 of this paper is transversal to the zero section. The computation there shows that $Z_s=s^{-1}(0)$, represents a non-trivial element in the singular equivariant homology of the pair $(F_n'(\Reals^d),\pt)$.

\item Homology isomorphism. 

In section 5.7 we claimed that there exists a $\Sigma_n$-invariant smooth manifold with boundary $M$ such that the pairs $(F_n'(\Reals^d),\pt)$ and $(M,\partial M)$ have isomorphic equivariant homology groups and $\partial M$ is transversal to  $Z_s$. This isomorphism is established by the excision and homotopy invariance of the equivariant homology. Below we provide two constructions yielding the manifold with boundary $M$, one construction uses smooth topology and another one uses piecewise-linear topology. Since our objective is a homology isomorphism, either approach suffices to show that $Z_s \cap (M,\partial M)$ also represents a non-trivial relative homology class in the singular equivariant homology of the pair $(M, \partial M)$.

\item Homologous cycles.

By perturbing the linear interpolation between the sections $t$ and $s$, to make it transversal to the zero section, we obtain a compact bordism with boundary between the manifolds with boundary $Z_s \cap (M,\partial M)$ and $Z_t \cap (M,\partial M)$. This results in establishing that $Z_t \cap (M,\partial M)$ represents a nonzero element in the equivariant homology of the pair $(M,\partial M)$, implying that $Z_t \cap (M,\partial M)$ (and hence $Z_t$) is not empty. Details about the transversality of the considered intersections are provided below.

\end{enumerate}

\subsection{Homology isomorphism and reduction to manifold with boundary.}\label{homology}
 
The last section of our text is somewhat confusing because we deal with two different types of excisions and two compactifications. Below we make things more precise.

The second time we refer to excision is the classical one from singular homology, namely $H_n(A,B)=H_n(A\setminus C,B\setminus C)$ provided that the interiors of $B$ and $A\setminus C$ cover $A$. The form of excision that we are using in the first part of the proof can be found in the book \cite{may1999}, on page 108 (Section 14.2): If $A\to Y$ is a cofibration, then all homology theories of $(Y, A)$ and $(Y/A, \pt)$ coincide.  In our proof, we claimed  that the $\Sigma_n$-equivariant homology groups of  $(F_n'(\Reals^d), \pt;\widehat \ZZ)$ are the same as those of  $(S^{nd}, X';\widehat \ZZ)$. This is intuitively obvious and easy to verify: Provided that $X' \to S^{nd}$ is a $\Sigma_n$-equivariant cofibration, setting $Y=S^{nd}$ and $A=X'$ yields the claimed isomorphism. 


It is well known that  $A \to Y$ is a cofibration if there is a retraction from $Y \times I \to (A\times I)\cap(Y \times {0})$, see Chapter~6 of \cite{may1999}. 
In the piecewise-linear argument, the cofibration property follows from the following general fact: If $A$ is a piecewise-linear polyhedron embedded in a piecewise-linear manifold $Y$ then the pair $(Y,A)$ is a cofibration. We need an equivariant version of this, so we provide some details below.

\subsubsection{The setup}
Before going into the proofs we set up further terminology. We can realize the one-point compactification of $\Reals^{nd}$ geometrically by the inverse stereographic projection $\pi^{-1}\colon \mathbb{R}^{nd} \cup \infty \to S^{nd}$. Here the origin of $\Reals^{nd}$ is mapped to the south pole of the sphere and the point at infinity to the north pole. Notice that the stereographic projection is a $\ZZ_2$-equivariant map, with the $\ZZ_2$ action on $S^{nd}$ given by the reflection on the horizontal equator $S^{nd-1}\subset S^{nd}$ and the $\ZZ_2$ action on $\Reals^{nd}-\{0\}$ given by the inversion with respect to the unit sphere $S^{nd-1}\subset \Reals^{nd}$, i.e., $\forall x \in \Reals^{nd}-\{0\}\colon x \mapsto \frac{x}{|x|^2}$. 

Since every linear subspace in $\Reals^{nd}$ is invariant under the inversion on the unit sphere, the fat diagonal $X\subset\Reals^{dn}$ is also invariant, hence $X'\subset S^{nd}$ is invariant under the reflection on the equatorial sphere. In the following, we denote both of these involutions by $i$. Abusing notation further, we denote by $B^{nd}$ both the $nd$-dimensional closed ball of unit radius and the lower hemisphere of $S^{nd}$. Similarly $\partial B^{nd}=S^{nd-1}$ will denote both the unit sphere in $\Reals^{nd}$ and the equator of $S^{nd}$.  The action of $\Sigma_n$ on $\Reals^{nd}$ and on $S^{nd}$ can be thought of as the restriction of the action of $\Sigma_n$ on $\Reals^{nd}\times \Reals$, where $\Sigma_n$ acts trivially on the last factor.

\subsubsection{Piecewise-linear topology}

The classic text of Rourke and Sanderson \cite{rs1972} is a great introduction to piecewise-linear topology. We will refer to this book several times in this section. We quickly recall the necessary definition of a polyhedron, a detailed discussion can be found in the first chapter of \cite{rs1972}. Given a point $x$ and a set $L$ in $\Reals^m$ denote by $xL$ the cone with vertex $x$ and base $L$, this is the set of convex combinations $xL:=\{\lambda x+(1-\lambda) y\in xL: \lambda\in [0,1], y \in L\}$. A subset $K \subset \Reals^m$ is said to be a polyhedron if, for every $x\in K$, there exists a neighborhood $U \subset K$ of $x$ and a set $L$ such that $xL=U$.  The second chapter of \cite{rs1972} explains how polyhedra can be manipulated via simplicial complexes, particularly the concept of gluing that appears below is discussed in Exercise~2 of 2.27. The third chapter discusses regular neighborhoods of polyhedra. The concept of collapse (or Whitehead simple homotopy) appears in that chapter in relation to regular neighborhoods and will be of relevance to us.

We now construct a PL-sphere $\mathcal{S}$ and a polyhedron $\mathcal{X}'$ contained in $\mathcal{S}$ together with a homeomorphism from $S^{nd} \to \mathcal{S}$ that sends $X' \to \mathcal{X'}$ and is equivariant both with respect to $\ZZ_2$ acting by reflection on the horizontal equator and to the action of $\Sigma_n$. We define $\mathcal{S}$ by gluing two disjoint copies $K_1$ and $K_2$ of $[-1,1]^{nd}$ on their boundary following the way the two copies of $B^{nd}$ are glued on their boundary to make $S^{nd}$.  Formally,  consider disjoint copies of the standard homeomorphism $\phi_1 \colon B^{nd} \to  K_1$ and $\phi_2 \colon B^{nd} \to  K_2$ and let $\mathcal{S}$ be the attaching space of $(\phi_2 \circ i \circ \phi_1^{-1})\colon \partial K_1 \to \partial K_2$ which is a $PL$ homeomorphism. The map $\phi_1$ extends from the ball $B^{nd}$ to a $\ZZ_2$-equivariant map of the whole sphere. Denote this map also by $\phi \colon S^{nd} \to \mathcal{S}$, finally let $\mathcal{X'}:=\phi(X')$.

To check that $\phi(X')$ is a polyhedron, first notice that by construction it is enough to show it in the restriction to $K_1$, but here $\phi(X')$ is exactly the fat diagonal restricted to the cube. Consider the linear space $H_{12}\subset \Reals^{nd}$ of codimension~$d$ given by the equalities $x_1=x_2$.  Directly from the definition we see that $K_1\cap H_{12}$ is a subpolyhedron of $K_1$, that is there is a triangulation of $K_1$ such that $H_{12}\cap K_1$ is its sub-triangulation. Consider the orbit of this triangulation under the action of $\Sigma_n$, observe that the orbit of $H_{12}$ is the fat diagonal in $K_1$. So we have a family of triangulations $\{\mathcal T_g\}_{g\in \Sigma_n}$ of $K_1$, and the subsets 
$$
\bigcap_{g\in \Sigma_n} \sigma_g,\quad \sigma_g\ \text{is a face of}\ \mathcal T_g 
$$
compose a cellular decomposition of $K_1$ so that the fat diagonal is its sub-cellular decomposition. Then this decomposition can be barycentrically subdivided to give a $\Sigma_n$-equivariant triangulation of $(K_1,\partial K_1)$ such that $(K_1\cap X')$ is a $\Sigma_n$-equivariant triangulated subpolyhedron contained in it.

Finally we extend this triangulation to all of $\mathcal{S}$ using the involution that maps $K_1$ to $K_2$ leaving their boundaries invariant. This finishes the construction. Now, Proposition 3.10 in \cite{rs1972} states that in this situation a regular neighborhood $\mathcal{N}$ of $\mathcal{X'}$ in $\mathcal{S}$ is a PL-manifold with boundary, the construction there allows us to choose $\mathcal{N}$  in a $\Sigma_n$-invariant way. Then Corollary~3.30 in \cite{rs1972} asserts that if $\mathcal{N}$ is the regular neighborhood of $\mathcal{X'}$ and it is a PL-manifold with boundary, then it collapses onto $\mathcal{X'}$. A simple analysis of the construction in the proof of Corollary ~3.30 \cite{rs1972} shows that this construction is also $\Sigma_n$-invariant. It follows that $X' \to S^{nd}$ is an equivariant cofibration, since the regular neighborhood collapses onto the subspace equivariantly. We conclude that any homology $H^{\Sigma_n}_*(F_n'(\Reals^d),\pt; \cdot)$ is isomorphic to its respective $H^{\Sigma_n}_*(S^{nd}, X'; \cdot)$, moreover, the equivariant relative cycle $Z_s$ represents a non-zero homology class in $H^{\Sigma_n}_*(S^{nd}, X'; \cdot)$.  The collapse from $\mathcal{N}$ to $\mathcal{X'}$ yields that the latter groups are isomorphic to $H^{\Sigma_n}_*(\mathcal{S},\mathcal{N}; \cdot)$, and if we choose $N$ small enough, the relative cycle $Z_s \cap (\mathcal{S},\mathcal{N})$ represents the image of the homology class $[Z_s]$.\\

The next step in our proof is to choose a manifold with boundary on $\mathcal{N}$. Below we provide details on how to do so using smooth topology. With the language that we have developed it is also easy to construct a piecewise linear manifold with boundary: 

$\bullet$ Using Proposition 3.10 of \cite{rs1972} again, we can take $\mathcal{M}=S^{nd}-\inte(\mathcal{N}')$ where $\mathcal{N}' \subset \mathcal{N}$ is another regular neighborhood of $\mathcal{X}'$.

In both, smooth or piecewise models, we have isomorphims, 
$$
H^{\Sigma_n}_*(F_n(\Reals^d),\pt ; \widehat \ZZ)\to H^{\Sigma_n}_*(\mathcal{S}, \mathcal X'; \widehat \ZZ)\to H^{\Sigma_n}_*(\mathcal{S}, \mathcal{N}; \widehat \ZZ)\to H^{\Sigma_n}_*(\mathcal{M},\partial \mathcal{M}; \widehat \ZZ).
$$

By similar reasoning, if $\mathcal{N}$ is small enough then the restriction map $H^{\Sigma_n}_*(Z_s,\pt ; \widehat \ZZ)\to H^{\Sigma_n}_*(Z_s,  Z_s \cap \mathcal{N} ; \widehat \ZZ)$ is an isomorphism, so the class $[Z_s \cap (\mathcal{M},\partial \mathcal{M})] \in H^{\Sigma_n}_*(\mathcal{M},\partial \mathcal{M}; \widehat \ZZ)$ is the image of the non-trivial class $[Z_s] \in H^{\Sigma_n}_*(F_n(\Reals^d),\pt ; \widehat \ZZ)$. The free $\Sigma_n$ action on the pair $(\mathcal M, \partial \mathcal M)$  induces a free $\Sigma_n$ action on $(\mathcal M,\partial \mathcal M) \times I$ (acting trivially on the second component). Denote by $\hat{\xi}$ the associated bundle and consider the $\Sigma_n$-equivariant section $\Gamma \colon (\mathcal M,\partial \mathcal M)  \times I \to \hat{\xi}$, $\Gamma(x,\lambda):=\lambda s(x)+(1-\lambda) t(x)$. Since the action of $\Sigma_n$ is free on $(\mathcal M,\partial \mathcal M) \times I$ we can pass to the quotient by $\Sigma_n$ and then perturb $\Gamma/\Sigma_n$, to a section $\hat{\Gamma}/\Sigma_n$ that is transversal to the zero section of the bundle. We can also assume that for all $x$, $\hat{\Gamma}(x,1)=s(x)$ and $|\hat{\Gamma}(x,0)-t(x)|$ is arbitrary small. Moreover, if we take a fine triangulation of $\mathcal{M}$, we can perturb the facets of $\partial \mathcal{M}$ smoothly and equivariantly so that $\hat{\Gamma}^{-1}(0)$ is transversal to the facets of $\partial \mathcal M$. This perturbation can be done so that the smooth facets are arbitrarly close to the flat ones. Then $\hat{\Gamma}^{-1}(0)$ is an equivariant compact singular bordism between the equivariant relative cycle  $Z_s \cap (\mathcal M,\partial \mathcal M)$ and an equivariant relative cycle that is arbitrary close to $Z_t \cap (\mathcal M,\partial \mathcal M)$. A triangulation of this bordism into a singular chain then witnesses that the two boundaries of the bordism are equivariantly homologous implying that $Z_t \cap (\mathcal M,\partial \mathcal M)$ is not empty.

\subsubsection{Smooth topology}
We now construct $M$ using elementary smooth topology instead of piecewise linear tools. Notice that the fact that the pair $(S^{nd},X')$ is a cofibration is topological, so it follows from the previous subsection by taking a homemorphism from the PL model to the smooth one. Now let 
$$
H^{ij}:=\{(x_1,x_2\ldots x_n,t): x_i \in \Reals^{d},  t\in \Reals,  x_i= x_j\}\subset \Reals^{nd+1},
$$ 
then $X=\Reals^{nd} \cap (\cup_{i\neq j} H^{ij})$ is the fat diagonal and $X'=S^{nd} \cap  (\cup_{i\neq j} H^{ij})$. Our configuration space is equivariantly diffeomorphic to $S^{nd}\setminus X'$ with the stereographic projection from $S^{nd}$ to $\mathbb R^{nd}$.

Consider the function $f \colon \Reals^{nd+1} \to \Reals$, 
$$
f(x)= \prod_{i<j} \dist(x,H^{ij})^2,
$$
This function is non-negative, smooth, and invariant with respect to the $\Sigma_n$ action. This function restricted to the sphere $S^{nd}$ vanishes precisely at $X'$.  Let $\nabla f$ be the gradient of the restriction of $f$ to $S^{nd}$, if we can show that the gradient of $f$ does not vanish on $f^{-1}(0,\tau)$  for sufficiently small $\tau>0$, then the flow along $-\nabla f$ (restricted to $S^{nd}$) is a $\Sigma_n$-equivariant deformation retraction of $N:=f^{-1}(0,\tau)$ onto $X'$. 

So once the existence of $\tau$ is established, we can define the manifold with boundary by $M:=f^{-1}[\tau,\infty) $ and follow the homological argument after the bullet point in the previous subsection. The only property that we use from $f$ (other than the fact that it vanishes on $X'$ and only on $X'$) is that it is given by a polynomial. Indeed, since we work on the sphere $S^{nd}$ that is given by algebraic equations, a simple vector calculation implies that the vanishing of $\nabla f$ is an algebraic condition. So the set of critical points of $f$ is algebraic, and $f$ takes the set of critical points to the set of critical values, which therefore is a semialgebraic\footnote{A set is \emph{semialgebraic} if it is given by algebraic equations and inequalities.} subset of the positive reals (see~\cite{rag1998} for the background). Sard's theorem, in turn, states that the set of critical values has zero measure. Since semialgebraic subsets of the reals of zero measure must be finite, we conclude that there exists a value $\tau>0$ so that for all $t<\tau$, $\nabla f$ is not zero.

\subsection{In textbook terms: using relative homology from the start}

Actually the most delicate part of the argument, the one-point compactification and excision, is not completely necessary. Here we describe one more simplification that allows us to argue in textbook terms without locally finite homology, just in terms of equivariant singular homology. Using the PL approach, we could consider not the configuration space of $n$-tuples in $\Reals^d$, but the configuration space of $n$-tuples in the cube $[-1,1]^d$, where we will work relative to the boundary of the cube.

The model for the configuration space in this case is the closed cube $Q := [-1,1]^{nd}$ relative to its subspace 
$$
Y = (X\cap Q)\cup \bd Q,
$$
where $X$ is the fat diagonal of $(\Reals^d)^n$. The permutation group $\Sigma_n$ (and therefore the alternating group $A_n$) acts freely on the complement $Q\setminus Y$. There is an equivariant CW-decomposition of $Q$ relative to $Y$ similar to the Fuks decomposition (and combinatorially equal to it) except that now all coordinates are in~$[-1,1]$, and the $n!$ cells of dimension $n+d-1$ are pasted to the fat diagonal and the boundary of $Q$. This cell decomposition has $(n+d-2)$-dimensional cells given by the conditions,
\begin{gather*}
  x_1^j = x_2^j = \dots = x_n^j,\\
\intertext{for $j = 1,\ldots, d-1$, and}
-1\leq x_1^d < \dots < x_{n_1}^d\leq x_{n_1+1}^d < \dots < x_n^d\leq 1.
\end{gather*}

We compute the relative equivariant homology $H^{\Sigma_n}_*(Q, Y; \widehat \ZZ)$ and identify the zero set of the test section in this homology in precisely the same manner as we did for the locally finite homology of the configuration space. The procedure and the result will be naturally the same as those in Section 5.6. Then we can find a $\Sigma_n$-invariant regular neighborhood $N$ of $Y$, pass to the PL-manifold $M = Q\setminus \inte Y$ with free group action, and then work in the textbook terms with the free $\Sigma_n$-action on the pair $(M, \partial M)$. Again, we could even use $A_n\subset \Sigma_n$ for odd primes to avoid non-constant coefficients.

\end{document}